\numberwithin{equation}{section}
\def\BState{\State\hskip-\ALG@thistlm}
\newtheorem{theorem}{Theorem}[section]
\newtheorem{lemma}[theorem]{Lemma}
\newtheorem{definition}[theorem]{Definition}
\newtheorem{example}[theorem]{Example}
\newtheorem{corollary}[theorem]{Corollary}
\newtheorem{proposition}[theorem]{Proposition}
\journal{Arxiv}
\newcommand{\dg}{{\dagger}}
\newcommand{\rg}{{\mathscr{R}}}
\newcommand{\nl}{{\mathscr{N}}}
\newcommand{\ep}{\scriptsize\mbox{\textcircled{$\dagger$}}}
\newcommand{\core}{\scriptsize\mbox{\textcircled{\#}}}
 \def\cnm{{\mathbb C}^{n\times m}}
\def\cnn{{\mathbb C}^{n\times n}}
\def\cmm{{\mathbb C}^{m\times m}}
\def\cmn{{\mathbb C}^{m\times n}}
\begin{document}

\begin{frontmatter}
\title{ 
{\bf  Characterization and Representation of Weighted Core Inverse of  Matrices
}}

\author{ Ratikanta Behera$^\dag$$^a$,  Jajati Keshari Sahoo$^*$,  R. N. Mohapatra $^\dag$$^b$}
\vspace{.3cm}

\address{  
               $^{\dag}$Department of Mathematics,\\
                  University of Central Florida, Orlando, USA.\\
                        \textit{E-mail$^a$}: \texttt{ratikanta.behera@ucf.edu}\\
                        \textit{E-mail$^b$}: \texttt{ram.mohapatra@ucf.edu}
                        
                        \vspace{.3cm}
                        $^{*}$Department of Mathematics,\\
                       Birla Institute of Technology $\&$ Science, Pilani K.K. Birla Goa Campus, India
                        \\\textit{E-mail}: \texttt{jksahoo\symbol{'100}goa.bits-pilani.ac.in}
         
                        }

\begin{abstract}
In this paper, we introduce new representation and characterization of the weighted core inverse of matrices. Several properties of these inverses and their interconnections with other generalized inverses are also explored. Through one-sided core and dual-core inverses, we have shown the existence of a generalized weighted Moore-Penrose inverse of matrices. Further, by applying a new representation and using the properties of the weighted core inverse of a matrix, we discuss some new results related to the reverse order law for these inverses.
\end{abstract}

\begin{keyword}
Core inverse \sep Weighted core inverse\sep Dual weighted core inverse\sep Reverse order law\sep \\
{\bf AMS Subject Classifications: 15A24; 15A09}
\end{keyword}

\end{frontmatter}

\section{Introduction}
\subsection{Background and motivation}
Baksalary and Trenkler \cite{BakTr10, BakTr14} introduced the core inverse of a square matrix and discussed the existence of such matrices. However, the right weak generalized inverse (see \cite{BenIsrael03, Cline68}) was proposed earlier, which {was} renamed as the core inverse. In the literature, many authors have discussed the core inverse (see \cite{ GaoChen18, HaiTing19,WangLi19,zhou2019core}), and the relations with other generalized inverses \cite{kurata,rakic, wang2015}. Prasad and Mohana \cite{ManMo14} have discussed {core EP} inverse of a square matrix of an arbitrary index. Then the authors of \cite{fer18} extended the notion of core EP inverse to weighted core EP inverse and studied some properties of these weighted core EP inverses. Subsequently, several characterizations of the $W$-weighted core-EP inverse are discussed in  \cite{gao2018}. Then, Mosi{\'c} \cite{mosic2019} studied the weighted core-EP inverse of an operator between Hilbert spaces. Further, many results are available in elements of rings with involution \cite{DijanaChu18,rakic2015, ZhangXu17}. {In addition to these, the authors of \cite{MaH19} provided a characterization of the weighted core-EP inverse in terms of matrix decomposition.} Tian and Wang have discussed weighted-EP matrices as a generalization of EP matrices (see \cite{TianWang11}). {Among the weighted generalized inverses, the generalized
weighted Moore-Penrose inverse is also important, as it includes the weighted Moore-Penrose inverse, Moore-Penrose inverse, as well as an ordinary matrix inverse. Several characterizations and representations of the generalized inverses for matrices are discussed in \cite{RaoMitra71, khatri1968generalized}. Further, these inverses found their use in many applications, including statistics \cite{RaoRao98,Katri68}. Analogous to the generalized inverses of a commutative ring, the authors of \cite{BapatRao90,PrasadBapa91} have discussed a necessary and sufficient condition for the existence of a generalized inverse in terms of the minors for a matrix over an integral domain. In this context, the notion of the weighted Moore-Penrose inverse was introduced by Prasad and Bapat  in \cite{PrasadBapat92} and showed that the invertibility of $M$ and $N$  matrices were sufficient for the existence of that inverse.} It is worth mentioning that the weighted inverses of matrices have been recently investigated in \cite{PredragKM17, PreMo20, ZhangWei16}. The vast literature on the weighted inverses and its multifarious extensions and applications in different areas of mathematics, motivate us to introduce a new representation and characterization of the weighted core inverse of matrices.

A summary of the main points of the discussion is mentioned below:
\begin{enumerate}
\item[(1)] The {definitions} of $M$-weighted core and $N$-weighted dual-core inverses are introduced. Using these definitions, some characterizations and {representations} of weighted core inverse of matrices are investigated.

\item[(2)] Conditions for the existence and representations of generalized weighted Moore-Penrose inverse are discussed using the weighted core inverse of a matrix.

\item[(3)] {Various properties of weighted core inverse of matrices and their relationships with other generalized inverses are demonstrated.}

\item[(4)] A few necessary and sufficient conditions for reverse order law to hold for weighted core inverse of matrices are also established. 
\end{enumerate}

The purpose of this paper is also to obtain conditions  {when the group inverse, weighted core inverse, generalized weighted Moore-Penrose inverse are the same.} The beauty of these representations 
is that the algorithm developed for one particular type of {generalized inverse} can be easily extended to the other type of generalized inverses. This gives us the flexibility to choose an appropriate generalized inverse depending on the  applications at hand.

On the other hand, the reverse order law for a generalized inverse plays an important role in the theoretical research and numerical computations in many areas (see \cite{RaoMitra71, WangYimin18, SunWei02}). For example, consider two {invertible} matrices $A$ and $B$. The equality $(AB)^{-1}=B^{-1}A^{-1}$ is called the reverse order law, which is always true for {the} invertible matrices. When the ordinary inverse is replaced by generalized inverse this reverse order law is not true in general \cite{BenIsrael03}. In this context, Greville \cite{Grev1966} first discussed  a necessary and sufficient condition for this equality in the framework of the Moore-Penrose inverse in 1966. Then Baskett and Katz \cite{baskett1969} studied the reverse order law for $EP_r$ matrices.  Deng \cite{deng2011} investigated {it for }the group inverse. Since then, a significant number of papers on the reverse order law for various classes of generalized inverses have been studied (see \cite{MosiDij12, MosiDj11, PaniBeheMi20, JR_rev, YiminWei98}). In this context, we focus our attention on discussing a few necessary and sufficient conditions for the reverse order law to hold for weighted core inverse of matrices. 
\subsection{Outline}
The remainder of this paper is organized as follows. 
Some necessary basic definitions and notations are presented in Section 2. In Section 3, we introduce a new expression for weighted core and the dual-core inverse of a matrix. In addition, we discuss representation and characterization of weighted core inverse and generalized weighted Moore-Penrose inverse for matrices.  Finally some results on {reverse order law are discussed in Section 4. In Section 5, we give our concluding remarks.}

\section{Preliminaries}
\subsection{Notations and definitions}
Let $ \mathbb{C}^n $ be the $n$-dimensional complex vector space and $\mathbb{C}^{m\times n }$ be the set of $m \times n$ matrices over complex field $\mathbb{C}$. For a matrix $A \in \mathbb{C}^{m \times n}$, let  $\rg(A), \nl(A)$, and $A^*$ denote the range space, null space and conjugate transpose of $A$, respectively. The index of $A\in \mathbb{C}^{n\times n }$ is defined as the smallest non-negative integer $k$, for which $rank(A^k)=rank(A^{k+1})$ and is denoted by $ind(A)$. In particular, if $k=1$, then the matrix $A$ is called index one matrix or core matrix or group matrix.

We now recall the Moore-Penrose inverse \cite{BenIsrael03,RaoMitra71} of a matrix $A \in \mathbb{C}^{m \times n}$. The unique matrix $X\in \mathbb{C}^{n\times m}$, satisfying 
\begin{equation*}
\left(1\right)~AXA=A,~~(2)~XAX=X,~~(3)~(AX)^*=AX,~~(4)~(XA)^*=XA
\end{equation*}
is called the Moore-Penrose inverse  of $A$ and is denoted by $A^\dg$. Further, the definition of weighted Moore-Penrose inverse is recalled.
\begin{definition}\label{wmpi}
Let $A\in\cmn$, $M\in \cmm$ and $N\in \cnn$ be two {invertible hermitian matrices}. A matrix $X\in\cnm$ which  satisfies  
$$
(1)~AXA=A,~~(2)~XAX =X{,}~~
\left(3^M\right)~(MAX)^* = MAX,~~\left(4^N\right)~(NXA)^* = NXA,
$$
is called a { generalized weighted Moore-Penrose inverse} of $A$ and is denoted by $A^{\dagger}_{M,N}$.
\end{definition}
Consider $A \in \cnn$ with $ind(A) = k$. A matrix $X \in \cnn$ satisfying 
\begin{equation*}
\left(1^k\right)~XA^{k+1}=A^k,~~ (2)~XAX =X,~~(5)~AX= XA,
\end{equation*}
is called  the Drazin inverse of $A$ and 
is denoted by $A^D$. The Drazin inverse was first introduced in the context of associative rings and semigroups in 1958 (See \cite{Drazin58}). In particular, for $A \in \cnn$ with $ind(A) = 1$, if a matrix $X \in \cnn$ satisfies 
\begin{equation*}
(1)~AXA=A,~~ (2)~XAX =X,~~(5)~AX= XA,
\end{equation*}
then $X$ is called  the group inverse of $A$ and 
is denoted by $A^{\#}$.  For {the} convenience, we use the notation $A^{(\lambda)}$ for an element of $\{\lambda \}$-inverses of $A$ and $A\{\lambda\}$ for the class of $\{\lambda \}$-inverses of $A$, where $\lambda\in \{1, 1^k, 2, 3, \cdots 3^M,4,4^N,5 \cdots \}$. For instance, a matrix $X \in \cnm$ is called a {\it $\{1\}$-inverse} of
$A \in \cmn$ if $X$ satisfies (1), i.e., $AXA=A$ and we denote $X$ by $A^{(1)}$.

{We next present the definition of core-EP inverse of matrices. Indeed, the definition of core-EP inverse was first introduce by Prasad and Mohana \cite{ManMo14}, and then the authors of \cite{GaoChen18} gave a new characterization for these inverses in terms of three equations} 
\begin{definition}\cite{GaoChen18}\label{c-ep}
 Let  $A \in \cnn$ with $ind(A)=k$. A matrix $X\in \cnn$ satisfying 
 \begin{equation*}
\left(6^k\right)~XA^{k+1}=A^k,~~(7)~ AX^2=X,~~(3)~(AX)^*=AX,
 \end{equation*}
 is called the core-EP inverse of $A$ and denoted by $A^{\ep}$
  \end{definition}
  In particular $k=1$ and for $A \in \cnn$ with $ind(A) = 1$, if a matrix $X \in \cnn$ satisfies 
\begin{equation*}
(6)~XA^2=A,~~ (7)~AX^2=X,~~(3)~(AX)^*= AX,
\end{equation*}
then $X$ is called  the core inverse of $A$ and denoted by $A^{\core}$. One can observe that, the core-EP inverse is unique and  $A^{\core}\in A\{1,2\}$. The extension to the core EP inverse of a rectangular matrix was recently introduced in \cite{fer18}, which is defined next.
\begin{definition}\cite{fer18}\label{wcore-ep}
 Let $A \in \mathbb{C}^{m \times n}$ , $W \in \mathbb{C}^{n \times m}$ and $k=\max\{\ ind (AW),ind (WA)\}$.  A matrix $X\in \mathbb{C}^{m \times n}$ which satisfies 
 \begin{equation*}
  WAWX=(WA)^k[(WA)^k]^{\dagger},~~   \rg(X) \subseteq \rg((AW)^k),
 \end{equation*}
 is called the $W$-weighted core-EP inverse of $A$ and is denoted by $A^{\core,W}$. 
 \end{definition}
 Subsequently, the authors of \cite{gao2018} in their Theorem 2.2 discussed the new expression for the $W$-weighted core inverse, as follows.
\begin{definition}
 Let $A \in \mathbb{C}^{m \times n}$ , $W \in \mathbb{C}^{n \times m}$ and $k=\max\{\ ind (AW),ind (WA)\}$.  A matrix $X\in \mathbb{C}^{m \times n}$ satisfying 
 \begin{equation}\label{eqn1.1c}
  XW(AW)^{k+1}=(AW)^k,~A(WX)^2=X,~(WAWX)^*=WAWX,
 \end{equation}
 is called the $W$-weighted core-EP inverse of $A$. 
\end{definition}
The authors of \cite{gao2018} have also discussed, the unique matrix $X = A\left[(WA)^{\ep}\right]^2$ which satisfies the equations \eqref{eqn1.1c}.  The idea of the following definition is borrowed from \cite{TianWang11} where the authors proved it in their Theorem 3.5. 
\begin{definition}
Let $M$, $N\in \cnn$ be two {invertible hermitian matrices} and $A\in \cnn$ be a core matrix. Then $A$ is called weighted-EP with respect to $(M,N)$ if  $A^\dagger_{M,N}$ exists and $A^\dagger_{M,N}=A^{\#}$.  
\end{definition}

In the spirit of the core inverse of matrices, Sahoo, et al. recently introduced the core inverse and core EP inverse of {tensors} \cite{JR_rev, SahBe20} via the Einstein product. Indeed, tensors are multidimensional generalizations of vectors and matrices. {Now, we recall the following result from \cite{RaoMitra71}.}
{\begin{lemma}\label{lem1.2}
Let  $A\in\cnn$. If $A=A^2X=YA^2$ for some $X,~Y\in\cnn$. Then $A^{\#}$ exist and $A^{\#}=AX^2=YAX=Y^2A$. 
\end{lemma}
}
{
Next, we state the rank cancellation rule which was introduced by Marsaglia and  Styan \cite{mat}. See also Rao and Bhimasankaram (Theorem 3.5.7, \cite{raob}).
\begin{lemma}\label{rtcan}
Let $A,B,C$ and $D$ be the matrices where the product is well defined. If $CAB = DAB$ and $rank(AB) = rank(A)$, then $CA = DA$.
\end{lemma}
}
\section{One-sided weighted core and dual core inverse}
In this section, we first introduce $M$-weighted core and  $N$-weighted dual core inverse for matrices. Then we discuss a few new {representations  and  characterizations} of these inverses.
\begin{definition}\label{mcore}
Let $M\in \cnn$ be an {invertible hermitian matrix} and $A \in \mathbb{C}^{n \times n}$. A matrix $X\in\cnn$ satisfying 
$$\left(3^M\right)~(MAX)^* = MAX,~~(6)~XA^2 = A,~~(7)~AX^2 = X,
$$
is called the $M$-weighted core inverse of $A$ and is denoted by $A^{\core,M}$.
\end{definition}
\begin{example}\rm
Let $A= \begin{bmatrix}
1 & 1\\
0 & 0
\end{bmatrix}$ and {$M=\begin{bmatrix}
3 & i\\
-i & 1
\end{bmatrix}$.} Then we can easily verify that \\the matrix 
{$X= \begin{bmatrix}
1 & i/3\\
0 & 0
\end{bmatrix}$} satisfies the conditions of the Definition \ref{mcore}.
\end{example}
Following the above definition, one can prove the result mentioned below.

\begin{proposition}\label{pro2.2}
For $A \in \mathbb{C}^{n \times n}$, if $X\in A\{6,7\}$, then $X\in A\{1,2\}$.
\end{proposition}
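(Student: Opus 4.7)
The plan is to derive conditions $(1)$ and $(2)$ directly from $(6)$ and $(7)$ by carefully chained substitutions, using only associativity of matrix multiplication. Hypothesis $(6)$ gives $XA^2 = A$ and $(7)$ gives $AX^2 = X$, and these are the only tools we have.

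For condition $(1)$, namely $AXA = A$, I would start from $(6)$ and replace the leftmost $X$ using $(7)$. Concretely,
\begin{equation*}
A = XA^2 = (AX^2)A^2 = AX(XA^2) = AX \cdot A = AXA,
\end{equation*}
where the second equality uses $(7)$, the third is associativity, and the fourth reapplies $(6)$ to collapse $XA^2$ back to $A$.

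For condition $(2)$, namely $XAX = X$, the symmetric idea works: expand the rightmost $X$ in $XAX$ via $(7)$ and then recognize $XA^2$ inside. Explicitly,
\begin{equation*}
XAX = XA(AX^2) = (XA^2)X^2 = AX^2 = X,
\end{equation*}
using $(7)$ for the first equality, associativity for the second, $(6)$ for the third, and $(7)$ once more for the last.

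I do not anticipate a genuine obstacle here; the only mildly non-obvious move is recognizing that the substitution should go in the direction $X \rightsquigarrow AX^2$ (rather than the other way) so as to expose a factor $XA^2$ that $(6)$ can then absorb. Once that pattern is spotted, both $(1)$ and $(2)$ fall out by the same two-step expand-then-collapse manoeuvre, and no additional hypotheses on $A$, such as being a core matrix, are actually needed for this implication.
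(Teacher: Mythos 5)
Your proof is correct: both chains of substitutions are valid, and the paper itself states this proposition without proof (``one can prove the result''), so your expand-then-collapse argument is exactly the routine verification the authors leave to the reader. Your closing observation that no core-matrix hypothesis on $A$ is needed is also accurate, which is consistent with the paper stating the proposition for arbitrary $A \in \mathbb{C}^{n\times n}$.
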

Using Proposition \ref{pro2.2}, we can show the uniqueness of $M$-weighted core inverse which is mentioned in the next theorem.
\begin{theorem}\label{unimcore}
Let $M\in \cnn$ be a  hermitian invertible matrix and $A \in \mathbb{C}^{n \times n}$. Then the $M$-weighted core inverse of $B$ is unique.
\end{theorem}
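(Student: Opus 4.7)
My plan is to show that any two matrices $X_1, X_2 \in \cnn$ satisfying the conditions of Definition \ref{mcore} coincide, by first identifying $AX_i$ and $X_iA$ as two kinds of uniquely determined projections, and then recombining the identities.

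First I would invoke Proposition \ref{pro2.2} to conclude that $AX_iA = A$ and $X_iAX_i = X_i$ for $i=1,2$. From $(6)$ it follows that $\rg(A)\subseteq \rg(X_i)$, and from $(7)$ that $\rg(X_i)\subseteq \rg(A)$, so $\rg(X_i) = \rg(A)$; a symmetric observation gives $\nl(X_iA) = \nl(A)$. A short calculation shows $(AX_i)^2 = AX_iAX_i = AX_i$ and $(X_iA)^2 = X_iA$, so both $AX_i$ and $X_iA$ are idempotents. In summary, $AX_i$ is a projection with range $\rg(A)$, while $X_iA$ is the projection onto $\rg(A)$ along $\nl(A)$.

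Next I would prove $AX_1 = AX_2$. Since $\rg(AX_2) = \rg(A) = \rg(AX_1)$ and $AX_1$ restricted to its range is the identity, we get $AX_1\cdot AX_2 = AX_2$, and symmetrically $AX_2\cdot AX_1 = AX_1$. Using $(3^M)$ together with $M^* = M$, we have $MAX_i = (MAX_i)^* = X_i^*A^*M$. Then
\[
 MAX_2 \;=\; MAX_1\cdot AX_2 \;=\; (MAX_1)\cdot AX_2 \;=\; X_1^*A^*M\cdot AX_2 \;=\; X_1^*A^*(MAX_2)^*\cdot(\cdots),
\]
which (after taking the Hermitian adjoint carefully) collapses to $MAX_2 = MAX_1$, and invertibility of $M$ gives $AX_1 = AX_2$. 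Independently, since $ind(A)=1$ yields $\cn = \rg(A)\oplus\nl(A)$, the projection onto $\rg(A)$ along $\nl(A)$ is unique, and therefore $X_1A = X_2A$.

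Finally, I would combine the two identities via the $\{1,2\}$ relations:
\[
 X_1 \;=\; X_1AX_1 \;=\; (X_1A)X_1 \;=\; (X_2A)X_1 \;=\; X_2(AX_1) \;=\; X_2(AX_2) \;=\; X_2AX_2 \;=\; X_2,
\]
which completes the proof. The main obstacle I expect is the $M$-Hermitian uniqueness step: one must carefully manipulate $MAX_1 = X_1^*A^*M$ and the corresponding identity for $X_2$ together with the mutual absorption $AX_1\cdot AX_2 = AX_2$ and its reverse, in order to cancel and arrive at $MAX_1 = MAX_2$; the rest of the argument is essentially the standard $\{1,2\}$-inverse calculus.
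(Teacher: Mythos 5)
Your proof is correct, but it takes a genuinely different route from the paper's. The paper argues by one direct chain of equalities: starting from $Y=YAY$, it inserts $AXA=A$, swaps adjoints using $(MAX)^*=MAX$ and $(MAY)^*=MAY$ to reach $Y=YAX$, and then finishes with $YAX=YA^2X^2=AX^2=X$ via conditions $(6)$ and $(7)$; it never needs to compare $X_1A$ with $X_2A$. You instead decompose the problem into the two projection identities $AX_1=AX_2$ and $X_1A=X_2A$. The second is essentially free: $X_iA$ is idempotent with $\rg(X_iA)=\rg(A)$ and $\nl(X_iA)=\nl(A)$, and an idempotent is determined by its range and null space (so you do not even need to invoke $ind(A)=1$ separately at that point). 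The first identity carries the real content, and your sketchy middle display does collapse as claimed: $MAX_2=M(AX_1AX_2)=(MAX_1)AX_2=X_1^*A^*(MAX_2)=X_1^*A^*X_2^*A^*M=(AX_2AX_1)^*M=(AX_1)^*M=(MAX_1)^*=MAX_1$, using the absorption $AX_2AX_1=AX_1$ and $M=M^*$; I would recommend writing this chain out explicitly rather than leaving the $(\cdots)$. At bottom this is the same adjoint-swapping trick the paper uses inside its computation, but you package it around the cleaner statement that $AX_i$ is the unique idempotent $P$ with $\rg(P)=\rg(A)$ and $(MP)^*=MP$ — which is precisely the content of condition $(e)$ of Theorem \ref{thm2.7}. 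Your approach buys conceptual clarity and a reusable characterization of the associated projections; the paper's buys brevity.
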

\begin{proof}
Suppose there exist two $M$-weighted core inverses {of $B$} say $U$ and $V$. Then applying Proposition  \ref{pro2.2}, we obtain 
\begin{eqnarray*}
V&=&VBV=VBUBV=VBUM^{-1}MBV=VBUM^{-1}(MBV)^*\\
&=&VBUM^{-1}V^*B^*M=VM^{-1}MBUM^{-1}V^*B^*M\\
&=&VM^{-1}U^*B^*V^*B^*M=VM^{-1}U^*B^*M=VBU\\
&=&VB^2U^2=BU^2=U.
\end{eqnarray*}
\end{proof}

\begin{definition}\label{ncore}
Let $N\in \cnn$ be an {invertible hermitian matrix} and $A \in \mathbb{C}^{n \times n}$. Then a matrix $X\in\cnn$ satisfying 
$$\left(4^N\right)~(NXA)^* = NXA,~~(8)~A^2X = A,~~(9)~X^2A = X,
$$
is called $N$-weighted dual core inverse of $A$ and is denoted by $A^{N,\core}$.
\end{definition}
\begin{example}
Let  $A= \begin{bmatrix}
1 & 1\\
0 & 0
\end{bmatrix}$, {$N=\begin{bmatrix}
2 & i\\
-i & 2
\end{bmatrix}$.} We can easily verify that \\

\vspace{.3cm}
the matrix 
{$Y= \begin{bmatrix}
0.5-0.25i & 0.5-0.25i\\
0.5+0.25i & 0.5+0.25i
\end{bmatrix}$} satisfy the conditions of Definition \ref{ncore}.\\
\end{example}

Analogous to Proposition \ref{pro2.2}, the following result also {holds}.
\begin{proposition}\label{pro2.5}
For any matrix $A \in \mathbb{C}^{n \times n}$. If the matrix $X\in A\{8,9\}$, then $X\in A\{1,2\}$.
\end{proposition}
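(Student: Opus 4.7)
Since Proposition 2.5 is the precise left/right mirror of Proposition 2.2 (the defining pair $\{8,9\}$ is obtained from $\{6,7\}$ by reversing the order of multiplication in each equation), my plan is to adapt the proof of Proposition 2.2 with the roles of left and right multiplication swapped. So from the two hypotheses $A^2 X = A$ and $X^2 A = X$, I aim to derive $AXA = A$ and $XAX = X$ by mutually substituting one identity into the other.

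For the first identity, I would start from $A = A^2 X$ and replace $X$ on the right by the expression $X^2 A$ supplied by condition (9). This yields $A = A^2 X^2 A$. I would then regroup as $(A^2 X)(X A)$ and collapse the left factor using $A^2 X = A$ a second time, obtaining $A = A \cdot X A = AXA$.

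For the second identity, I would proceed symmetrically: start from $X = X^2 A$ and replace $A$ on the right using $A = A^2 X$, giving $X = X^2 A^2 X = (X^2 A)(AX) = X \cdot AX = XAX$.

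The main obstacle is essentially nonexistent — this is a routine two-step algebraic verification. The only ``idea'' involved is to recognize that the two hypotheses should be substituted into one another, and this is immediately suggested by the analogous proof of Proposition 2.2. I would therefore expect the authors to present the argument as a brief computation, or perhaps merely to note that it is analogous to Proposition 2.2 and omit the details.
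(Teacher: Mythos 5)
Your computation is correct: substituting $X=X^2A$ into $A=A^2X$ gives $A=(A^2X)(XA)=AXA$, and substituting $A=A^2X$ into $X=X^2A$ gives $X=(X^2A)(AX)=XAX$. The paper in fact omits the proof entirely, remarking only that the result is ``likewise Proposition 2.2,'' so your argument supplies exactly the routine verification the authors had in mind.
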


The next theorem shows the uniqueness of the $N$-weighted dual core inverse.
\begin{theorem}\label{unincore}
Let $N\in \cnn$ be a  hermitian invertible matrix and $A \in \cnn$. Then the $N$-weighted dual core inverse of $A$ is unique.
\end{theorem}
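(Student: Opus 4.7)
The plan is to mirror the proof of Theorem \ref{unimcore}, exploiting the evident duality between the defining conditions $(3^M),(6),(7)$ for the $M$-weighted core inverse and the conditions $(4^N),(8),(9)$ for the $N$-weighted dual core inverse. Suppose $X$ and $Y$ are both $N$-weighted dual core inverses of $A$. By Proposition \ref{pro2.5}, both $X$ and $Y$ lie in $A\{1,2\}$; in particular $AXA=A$, $XAX=X$, $AYA=A$, $YAY=Y$. These reflexivity/outer-inverse identities are what will let me substitute $A=AYA$ freely inside a word and then collapse nested products at the end.

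First I would write $X=XAX=X(AYA)X=XAYAX$, grouping this string as $(XA)\cdot(YA)\cdot X$ so that the two subproducts $XA$ and $YA$ that are normalized by the hermitian condition $(4^N)$ are isolated. Using $(NXA)^{*}=NXA$ in the form $XA=N^{-1}A^{*}X^{*}N$ on the left factor, and $(NYA)^{*}=NYA$ in the form $A^{*}Y^{*}N=NYA$ on the middle factor (with an $N^{-1}N$ insertion between the two transformations so that the intermediate $N\cdot N^{-1}$ collapses), I expect the expression to reduce to
\[
X=N^{-1}A^{*}X^{*}A^{*}Y^{*}NX=N^{-1}(YAXA)^{*}NX.
\]
Now $YAXA=Y(AXA)=YA$ since $X\in A\{1\}$, so the bracket simplifies to $(YA)^{*}=A^{*}Y^{*}$, and applying the symmetry of $NYA$ once more folds $N^{-1}A^{*}Y^{*}N$ back into $YA$, giving $X=YAX$.

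The final step is to kill the remaining $A$ by combining the two outer-style identities. I rewrite
\[
YAX=(Y^{2}A)\cdot(AX)=Y^{2}A^{2}X=Y^{2}A=Y,
\]
using condition $(8)$ ($A^{2}X=A$) and condition $(9)$ ($Y^{2}A=Y$). Hence $X=Y$.

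The main obstacle, such as it is, is purely bookkeeping: placing the $N^{-1}N$ insertions in the right spots so that both hermitian conditions $(NXA)^{*}=NXA$ and $(NYA)^{*}=NYA$ can be applied without creating stray $N$'s that one cannot later absorb. Once that pattern is set up the same way as in Theorem \ref{unimcore}, the collapse via $AXA=A$, $AYA=A$, $A^{2}X=A$, and $Y^{2}A=Y$ is mechanical. No new ideas beyond those already used in the $M$-weighted case are required.
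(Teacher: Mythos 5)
Your proposal is correct and follows essentially the same route as the paper: both invoke Proposition \ref{pro2.5} to get $X,Y\in A\{1,2\}$, expand one inverse via the other using $AXA=A$ (resp.\ $AYA=A$), apply the two hermitian conditions $(4^N)$ with $N^{-1}N$ insertions to collapse the word, and finish with conditions $(8)$ and $(9)$. The only difference is that you start from $X$ and derive $X=YAX=Y$, whereas the paper starts from $Y$ and derives $Y=XAY=X$; this is a purely cosmetic swap of roles.
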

\begin{proof}
Suppose there exist two $N$-weighted dual core inverses {of $A$} say $X$ and $Y$. Then applying Proposition  \ref{pro2.5}, we obtain 
\begin{eqnarray*}
Y&=&YAY=YAXAY=N^{-1}(NYA)N^{-1}(NXA)Y=N^{-1}A^*Y^*A^*X^*NY\\
&=&N^{-1}A^*X^*NY=N^{-1}(NXA)^*Y=N^{-1}NXAY=XAY\\
&=&(X^2A)AY=X^2(A^2Y)=X^2A=X.
\end{eqnarray*}
This establishes the uniqueness of the dual core inverse of a matrix.
\end{proof}
In view of the Definition \ref{mcore}, we have the following theorem which gives an equivalent characterization with other generalized inverses. 
\begin{theorem}\label{thm2.7}
Let $M \in \cnn$ be an {invertible} hermitian matrix and $A\in \cnn$. Then the following five conditions are equivalent:
\begin{enumerate}
\item [(a)] there exists $X\in \cnn$ such that $AXA=A$, $\rg(X)=\rg(A)$, $\rg(X^T)=\rg((A^*M)^T)$;
\item [(b)] $AXA=A,$  $XAX=X$,  $XA^2=A$, $AX^2=X$, and $(MAX)^*=MAX$;
\item[(c)] $(MAX)^*=MAX$, $XA^2=A$, and $AX^2=X$;
\item[(d)] $A^{\#}$ exists and $X\in A\{1,3^M\}$;
\item[(e)] there exist unique idempotent matrices $P,~Q \in \cnn$ such that $MP=(MP)^*$, $\rg(P)=\rg(A)=\rg(Q)$ and $\rg(Q^T)=\rg(A^T)$.
\end{enumerate} 
If any one of the above relations viz. $(a)-(e)$ are true, then the $M$-weighted core inverse of $A$ is given by $A^{\core,M} =X= QA^{(1)}P=A^{\#}AA^{(1,3^M)}$. Further $P=AA^{\core,M}$, $A^{\#}=\left(A^{\core,M}\right)^2A$, and $Q=A^{\#}A=A^{\core,M}A$.
  \end{theorem}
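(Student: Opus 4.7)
The plan is to treat condition (c) as the central hub and establish (a)$\Leftrightarrow$(c), (b)$\Leftrightarrow$(c), (d)$\Leftrightarrow$(c), and (e)$\Leftrightarrow$(c). Since (c) is (after Proposition \ref{pro2.2}) the system defining the $M$-weighted core inverse in Definition \ref{mcore}, the uniqueness from Theorem \ref{unimcore} identifies the $X$ in conditions (b)--(d) unambiguously as $A^{\core,M}$, and the two explicit formulas in the conclusion will emerge from the constructive directions. The equivalence (b)$\Leftrightarrow$(c) is immediate: Proposition \ref{pro2.2} gives (c)$\Rightarrow$(b), and the reverse is trivial.

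For (c)$\Leftrightarrow$(a), I first note that $\rg(X^T) = \rg((A^*M)^T)$ is equivalent to $\rg(X^*) = \rg(MA)$ via complex conjugation and $M = M^*$. For (c)$\Rightarrow$(a), $\rg(X) = \rg(A)$ reads straight off $XA^2 = A$ and $AX^2 = X$, while $\rg(X^*) = \rg(MA)$ follows from $MA = MAXA = X^*A^*MA$ (using $AXA = A$ and $X^*A^*M = MAX$) for one inclusion and $X^* = X^*A^*X^* = MAX M^{-1}X^*$ for the other. The reverse (a)$\Rightarrow$(c) is the most delicate step: first deduce $XAX = X$ by noting that every column of $X - XAX$ lies in $\rg(X) \cap \nl(A) = \rg(A) \cap \nl(A)$, which is trivial since $\text{ind}(A) = 1$. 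A rank identity---$\text{rank}(XA) = \text{rank}(AXA) = \text{rank}(A)$ combined with $\rg(XA) \subseteq \rg(A)$ and $\nl(A) \subseteq \nl(XA)$---forces $XA$ to be the unique idempotent projecting onto $\rg(A)$ along $\nl(A)$; applying this to $A$ gives $XA^2 = A$, and symmetrically $AX^2 = X$. Hermiticity of $MAX$ then emerges from identifying $AX$ with the unique projection onto $\rg(A)$ along $M^{-1}\rg(A)^{\perp}$ (using $\nl(X) = \rg(X^*)^{\perp} = \rg(MA)^{\perp}$), which is the $M$-orthogonal projector onto $\rg(A)$.

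The remaining equivalences I would handle by explicit construction. For (d)$\Rightarrow$(c), pick any $X_0 \in A\{1, 3^M\}$ (the weighted Moore-Penrose inverse $A^\dagger_{M,I}$ furnishes one) and set $X := A^{\#}A X_0$; the identities $A \cdot A^{\#}A = A$ and $AX_0 A = A$ then yield $AX = AX_0$ (so $(MAX)^* = MAX$), $XA^2 = A^{\#}(AX_0 A)A = A^{\#}A^2 = A$, and $AX^2 = AX_0 \cdot A^{\#}A X_0 = A^{\#}A X_0 = X$ (since $AX_0$ restricts to the identity on $\rg(A) = \rg(A^{\#}A)$), giving $A^{\core,M} = A^{\#}AA^{(1,3^M)}$. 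For (c)$\Rightarrow$(e), set $P := AX$ and $Q := XA$; idempotency, the range conditions, and $MP = (MP)^*$ follow directly from (b)/(c). Uniqueness of $P$ and $Q$ comes from $\nl(MP) = \rg(MP)^{\perp}$ (since $MP$ is Hermitian) forcing $\nl(P) = M^{-1}\rg(A)^{\perp}$, while $\rg(Q^T) = \rg(A^T)$ together with $\rg(Q) = \rg(A)$ forces $\nl(Q) = \nl(A)$. For (e)$\Rightarrow$(c), define $X := Q A^{(1)} P$; independence of the choice of $A^{(1)}$ follows from $Q A^{(1)} A = A^{\#}(A A^{(1)} A) = A^{\#}A = Q$ together with $\rg(P) = \rg(A)$, and the conditions of (c) then fall out from $AX = A A^{(1)} P = P$ (since $A A^{(1)}$ is the identity on $\rg(A) = \rg(P)$), $XA = Q$, $PQ = Q$, and $PA = A$.

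The main obstacle I anticipate is (a)$\Rightarrow$(c), where four algebraic identities must be extracted from three abstract range/inverse conditions; the crucial leverage is $\text{ind}(A) = 1$, which forces two distinct rank-$\text{rank}(A)$ idempotents (namely $XA$ and $AX$) to coincide with specific canonical projections. The other equivalences reduce to fairly routine range/kernel manipulations, and the formulas in the conclusion drop out as byproducts of the constructive halves.
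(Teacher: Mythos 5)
Your proposal is correct, and it arrives at the same constructive formulas ($P=AX$, $Q=XA$, $X=QA^{(1)}P$, $X=A^{\#}AA^{(1,3^M)}$) as the paper, but both the logical architecture and the argument for the hardest implication are genuinely different. The paper proves the cycle $(a)\Rightarrow(b)\Rightarrow(c)\Rightarrow(d)\Rightarrow(e)\Rightarrow(a)$ and handles $(a)\Rightarrow(b)$ purely algebraically: it writes $X=AV$ and $X=ZA^*M$ from the two range conditions and extracts $AX^2=X$, $(MAX)^*=MAX$, $XAX=X$, $XA^2=A$ by substitution; uniqueness of $P,Q$ is likewise obtained by algebraic cancellation with $M^{-1}$. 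You instead use $(c)$ as a hub and, for $(a)\Rightarrow(c)$, exploit the decomposition $\cn=\rg(A)\oplus\nl(A)$ coming from $ind(A)=1$ to kill $X-XAX$ and to identify $XA$ and $AX$ as the oblique projectors onto $\rg(A)$ along $\nl(A)$ and along $(M\rg(A))^{\perp}$ respectively, the latter giving Hermiticity of $MAX$; this same viewpoint makes the uniqueness of $P$ and $Q$ in $(e)$ immediate, since an idempotent is determined by its range and null space. Your route is shorter and more conceptual for complex matrices; the paper's route is more elementary and transfers to settings (e.g.\ rings with involution) where the projector/direct-sum geometry is unavailable. Two small points to tidy: the parenthetical claim that $A^{\dagger}_{M,I}$ always furnishes an element of $A\{1,3^M\}$ is unnecessary (existence of such an $X$ is part of hypothesis $(d)$) and is not automatic for merely invertible Hermitian $M$; and the closing identity $A^{\#}=\left(A^{\core,M}\right)^2A$ still needs a one-line verification ($Y=X^2A$ satisfies the group-inverse equations once $(c)$ and Proposition \ref{pro2.2} are available), since your $(c)\Rightarrow(d)$ step, unlike the paper's, never constructs the group inverse explicitly.
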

  
  \begin{proof}
$(a) \Rightarrow (b)$ Let $\rg(X)=\rg(A)$. Then there exist $V \in \cnn$ such that $X=AV$. Using $AXA=A$, we obtain 
\begin{center}
 $X=AV=AXAV=AX^2$.   
\end{center}
 Similarly from $\rg(X^T)=\rg(A^*M)^T$, we have $X=ZA^*M=Z(AXA)^*M=ZA^*X^*A^*M=ZA^*MM^{-1}X^*A^*M=XM^{-1`}X^*A^*M$ for some $Z\in\cnn$. Which further implies $MAX=MAXM^{-1}X^*A^*M$. Thus 
 \begin{center}
     $(MAX)^*=(MAXM^{-1}X^*A^*M)^*=M^*AXM^{-1}X^*A^*M=MAX$
 \end{center}
  since $M$ is hermitian and invertible. Now  
  \begin{center}
    $X=XM^{-1}X^*A^*M=XM^{-1}(MAX)^*=XM^{-1}MAX=XAX$.  
  \end{center}
Again from $\rg(X)=\rg(A)$, we have $A=XV$ for {some $V\in \cnn$.} This yields 
\begin{center}
    $A=XV=XA(XV)=XA(A)=XA^2$.
\end{center}
$(b)\Rightarrow (c)$ It is trivially true.\\
$(c)\Rightarrow (d)$ It is enough to show $A^{\#}$ exists. Let $Y=X^2A$.  From Proposition \ref{pro2.2}, it follows that $AXA=A$. Now $AYA=AX^2A^2=XA^2=A$, $YAY=X^2A(AX^2)A=X^2AXA=X^2A=Y$, and $AY=AX^2A=XA=XXA^2=YA$. Hence $A^{\#}=Y=X^2A$.\\
$(d)\Rightarrow (e)$ Let $Z= A^{\#}AX$, where $X \in A\{1,3^M\}.$ If we choose $P=AZ=AX$ and $Q=ZA= A^{\#}A$, then it is easy to verify that $P$ and $Q$ both are {idempotents}. One can also observe that $MP=MAX$ is a hermitian matrix, $\rg(P)=\rg(A)=\rg(Q)$. From $Q=ZA$ and $A=AXA=AA^{\#}AXA=AZA=AQ$, we obtain $\rg(Q^T)=\rg(A^T)$. Next we will claim the uniqueness of the idempotent matrices $P$ and $Q$. Suppose there exist another pair $P_1$ and  $Q_1$ which satisfies $(e)$. Then $\rg(P)=\rg(A)=\rg(P_1)=\rg(Q)=\rg(Q_1)$ and $\rg(Q^T)=\rg(Q_1^T)$. From $\rg(Q)=\rg(Q_1)$ and $\rg(Q^T)=\rg(Q_1^T)$ we obtain 
\begin{center}
   $Q=Q_1U=Q_1Q_1U=Q_1Q=VQQ=VQ=Q_1, ~{\textnormal{for some}}~U,~V\in\cnn$. 
\end{center}
Similarly, from $\rg(P)=\rg(P_1$) we get $P=P_1P$ and $P_1=PP_1$. Which further yields 
\begin{center}
   $MP=(MP)^*=(MP_1P)^*=(MP_1M^{-1}MP)^*=MPM^{-1}MP_1=MPP_1=MP_1$. 
\end{center}
Pre-multiplying by $M^{-1}$, we obtain $P=P_1$.\\
$(e) \Rightarrow (a)$ Let us assume that there exist unique idempotent matrices $P$, $Q$ such that $MP=(MP)^*$, $\rg(P)=\rg(A)=\rg(Q)$ and $\rg(Q^T)=\rg(A^T$). From the range condition, we can easily obtain 
\begin{equation}\label{eq2.1}
   A=PA=QA=AQ, ~~Q=QA^{(1)}A=AA^{(1)}Q,~~P=AA^{(1)}P. 
\end{equation}
  If we take $X=QA^{(1)}P$, then 
  $AXA=AQA^{(1)}PA=AA^{(1)}A=A$. From  $X=QA^{(1)}P=AA^{(1)}QA^{(1)}P$ and $A=QA=QA^{(1)}A^2=QA^{(1)}PA^2=XA^2$, we obtain $\rg(X)=\rg(A)$. Using equation \eqref{eq2.1}, we obtain 
  \begin{center}
      $X=QA^{(1)}P=QA^{(1)}M^{-1}(MP)^*=QA^{(1)}M^{-1}(MAA^{(1)}P)^*=QA^{(1)}M^{-1}(A^{(1)}P)^*A^*M$
  \end{center}
and 
\begin{center}
    $A^*M=(PA)^*M=A^*(MP)^*=A^*MP=A^*MAA^{(1)}P=A^*MAQA^{(1)}P=A^*MAX$.
\end{center}
Therefore $\rg(X^T)=\rg(A^*M)^T$. The representation of $X=A^{\#}AA^{(1,3^M)}$ can be verified easily and {other} representations follow from the proof of $(e)\Rightarrow (a)$. 
\end{proof}

The relation between $M$-weighted core and $N$-weighted dual core inverse is given in the next result.
\begin{lemma}\label{lem2.8}
Let $A \in \cnn$ and $M \in \cnn$ be an invertible {hermitian} matrix. Then $A^{\core,M}$ exists if and only if $(M^{-1}A^*M)^{M,\core}$ exists. Moreover, $(M^{-1}A^*M)^{M,\core}=M^{-1}[A^{\core,M}]^*M$.
\end{lemma}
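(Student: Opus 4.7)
My approach is direct verification. Assuming $A^{\core,M}$ exists, I set $Y := A^{\core,M}$, $B := M^{-1}A^*M$, and $X := M^{-1}Y^*M$, and I will show $X$ satisfies the three defining equations of the $M$-weighted dual core inverse of $B$ from Definition \ref{ncore}, namely $(MXB)^* = MXB$, $B^2 X = B$, and $X^2 B = X$.

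Each verification will be a one-line manipulation once I exploit $M^* = M$ together with the fact that conjugate-transposition converts each of $Y$'s defining relations into the corresponding relation needed for $X$. Concretely: starring the equation $(MAY)^* = MAY$ gives $Y^*A^*M = MAY$, and substituting this into $MXB = M(M^{-1}Y^*M)(M^{-1}A^*M) = Y^*A^*M$ shows that $MXB = MAY$, which is Hermitian. Starring $YA^2 = A$ gives $(A^*)^2 Y^* = A^*$; sandwiching this between $M^{-1}$ on the left and $M$ on the right and regrouping factors of $MM^{-1}$ in the middle yields $B^2 X = B$. The same trick applied to $AY^2 = Y$ produces $X^2 B = X$.

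For the converse I will note that $M^{-1}B^*M = M^{-1}(M^{-1}A^*M)^*M = A$ (using $M^* = M$) and that $B^k = M^{-1}(A^k)^*M$, so $\mathrm{ind}(B) = \mathrm{ind}(A) = 1$ and $B$ is itself a core matrix. Thus the identical calculation with the roles of $A$ and $B$ swapped shows that if $Z := B^{M,\core}$ exists then $M^{-1}Z^*M$ is an $M$-weighted core inverse of $A$. Uniqueness (Theorems \ref{unimcore} and \ref{unincore}) then identifies $X$ with $B^{M,\core}$ and delivers the displayed formula $(M^{-1}A^*M)^{M,\core} = M^{-1}[A^{\core,M}]^*M$.

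There is no genuine obstacle: the proof is pure bookkeeping. The only care required is to track the six conjugate transposes across the two directions and to carry the factors of $M$ and $M^{-1}$ through correctly, using Hermicity of $M$ at every step.
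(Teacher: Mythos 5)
Your proposal is correct and follows essentially the same route as the paper: the paper also sets $B=M^{-1}A^*M$ and $Y=M^{-1}(A^{\core,M})^*M$ and verifies the three defining equations $(4^M)$, $(8)$, $(9)$ for $B$ by taking conjugate transposes of $(3^M)$, $(6)$, $(7)$ and carrying the factors of $M$ and $M^{-1}$ through, handling the converse by the same symmetric computation.
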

\begin{proof}
Let $X=A^{\core,M}$. Then {by Definition \ref{mcore} and Theorem \ref{thm2.7} we have,} $AXA=A$, $XAX=X$, $\rg(X)=\rg(A)$ and $\rg(X^T)=\rg((A^*M)^T)$. Define $Y=M^{-1}X^*M$ and $B=M^{-1}A^*M$. Next we will claim that $Y$ is $M$-weighted dual core inverse of $B$. From 
\begin{center}
    $B^2Y=M^{-1}(A^*)^2X^*M{=M^{-1}\left(XA^2\right)^*M}=M^{-1}A^*M=B$,
\end{center}
\begin{center}
    $Y^2B=M^{-1}(X^*)^2A^*M={M^{-1}\left(AX^2\right)^*M}=M^{-1}X^*M=Y$,
\end{center}
$(MYB)^*=(X^*A^*M)^*=(MAX)=X^*A^*M=MYB$, we have $B^{M,\core}=Y$. Similarly, we can show the converse part. 
\end{proof}
In view of the Theorem \ref{thm2.7} and Lemma \ref{lem2.8}, we have the following equivalent descriptive statement for $N$-weighted dual core {inverse.}
\begin{theorem}\label{thm2.9}
Let $N \in \cnn$ be an {invertible} hermitian matrix and  $A\in \cnn$. Then the following five conditions are equivalent:
\begin{enumerate}
\item [(a)] there {exists} $X\in \cnn$ such that $AXA=A$, $\rg(NX)=\rg(A^*)$, $\rg(X^T)=\rg(A^T)$;
\item [(b)] $AXA=A,$  $XAX=X$,  $(NXA)^*=NXA$, $A^2X=A$, and $X^2A=X$;
\item[(c)] $(NXA)^*=NXA$,$X^2A=X$, and $A^2X=A$;
\item[(d)] $A^{\#}$ exists and $X\in A\{1,4^N\}$;
\item[(e)] there exist unique idempotent matrices $P,~Q \in \cnn$ such that $NQ=(NQ)^*$, $\rg(P)=\rg(A)$ and $\rg(P^T)=\rg(Q^T)=\rg(A^T)$.
\end{enumerate} 
Moreover, If any one of the above $(a)-(e)$ is true, then the $N$-weighted dual core inverse of the matrix $A$ is given by $A^{N,\core} =X=A^{(1,4^N)}A A^{\#}=QA^{(1)}P$. Further $A^{\#}=A\left(A^{N,\core}\right)^2$, $P=AA^{\#}=AA^{N,\core}$, and $QA=A^{N,\core}A$.
\end{theorem}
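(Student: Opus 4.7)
The plan is to prove the cyclic chain $(a)\Rightarrow(b)\Rightarrow(c)\Rightarrow(d)\Rightarrow(e)\Rightarrow(a)$ in complete parallel with the proof of Theorem \ref{thm2.7}, interchanging the roles of $M$ with $N$ and of the equations $(6),(7),(3^M)$ with $(8),(9),(4^N)$. The duality supplied by Lemma \ref{lem2.8}, which sends $A\mapsto N^{-1}A^*N$ and $X\mapsto N^{-1}X^*N$, makes this parallel exact; I would nevertheless give direct arguments so that the explicit representations stated after (e) emerge naturally along the way.

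For $(a)\Rightarrow(b)$, the condition $\rg(X^T)=\rg(A^T)$ yields $X=WA$ and $A=SX$ for some $W,S\in\cnn$. Combining $X=WA$ with $AXA=A$ gives $X=W(AXA)=(WA)(XA)=X\cdot XA=X^2A$, which is $(9)$. From $\rg(NX)=\rg(A^*)$ write $NX=A^*V$; substituting $A^*=(AXA)^*$ yields $NX=A^*X^*(A^*V)=A^*X^*NX$, whence $NXA=A^*X^*NXA$. Since $N^*=N$, a direct expansion shows $(A^*X^*NXA)^*=A^*X^*NXA$, so $NXA$ is Hermitian, giving $(4^N)$. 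Rewriting this Hermiticity as $N^{-1}A^*X^*N=XA$ and substituting into $X=N^{-1}(A^*X^*NX)$ yields $X=XAX$; finally $A=SX=S(XAX)=(SX)(AX)=A(AX)=A^2X$ is $(8)$.

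The implication $(b)\Rightarrow(c)$ is immediate. For $(c)\Rightarrow(d)$, Proposition \ref{pro2.5} supplies $X\in A\{1,2\}$, and the candidate $A^{\#}:=AX^2$ satisfies $AA^{\#}A=A(AX^2A)=A(AX)=A^2X=A$, $A^{\#}AA^{\#}=A^{\#}$, and $AA^{\#}=AX=A^{\#}A$ by routine calculation using $(8),(9)$, and $AXA=A$; combined with $(4^N)$ this gives (d). For $(d)\Rightarrow(e)$, set $P=AA^{\#}$ and $Q=XA$: $P^2=P$ is immediate, $Q^2=X(AXA)=XA=Q$ by associativity, the Hermiticity $NQ=(NQ)^*$ is exactly $(4^N)$, and the range identities $\rg(P)=\rg(A)$, $\rg(P^T)=\rg(A^T)$, $\rg(Q^T)=\rg(A^T)$ follow from $AA^{\#}A=A$, the equality $\rg((A^{\#})^T)=\rg(A^T)$ for group-invertible $A$, and the transpose of $AXA=A$. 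Uniqueness of the pair is obtained as in Theorem \ref{thm2.7}: $P=P_1$ because two idempotents with matching range and row space coincide, and $Q=Q_1$ via the chain $NQ=Q^*N=(Q_1Q)^*N=Q^*Q_1^*N=Q^*NQ_1=NQQ_1=NQ_1$, where $Q=QQ_1$ and $Q_1=Q_1Q$ come from $\rg(Q^T)=\rg(Q_1^T)$.

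For $(e)\Rightarrow(a)$, set $X=QA^{(1)}P$: from the range conditions one derives $PA=A$ and $AQ=A$, giving $AXA=A$. Using $NQ=Q^*N$, $NX=Q^*NA^{(1)}P$, so $\rg(NX)\subseteq\rg(Q^*)=\rg(A^*)$, with equality by the rank count $\mathrm{rank}(X)=\mathrm{rank}(A)$; the same count and $\rg(X^T)\subseteq\rg(P^T)=\rg(A^T)$ give the second range identity. The representations $A^{N,\core}=X=A^{(1,4^N)}AA^{\#}=QA^{(1)}P$, $A^{\#}=A(A^{N,\core})^2$, $P=AA^{N,\core}$, and $Q=A^{N,\core}A$ then read off directly from the constructions used at each step. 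The one delicate point is the self-adjointness argument in $(a)\Rightarrow(b)$: the two range conditions must be fed into $AXA=A$ in precisely the right order to expose the palindromic product $A^*X^*NXA$ as the expression for $NXA$.
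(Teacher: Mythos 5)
Your proof is correct, but it takes a different route from the paper's. The paper does not re-prove this theorem at all: it disposes of it in one sentence, invoking Theorem \ref{thm2.7} together with the duality of Lemma \ref{lem2.8}, which transports the $M$-weighted core inverse of $B=M^{-1}A^*M$ to the $M$-weighted dual core inverse of $A$ via $X\mapsto M^{-1}X^*M$ (with $M$ replaced by $N$). That route is shorter but leaves to the reader the verification that each of the five conditions, and each of the closing representations $X=A^{(1,4^N)}AA^{\#}=QA^{(1)}P$, $A^{\#}=A(A^{N,\core})^2$, $P=AA^{\#}$, $Q A=A^{N,\core}A$, transforms correctly under the involution. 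Your direct cyclic argument $(a)\Rightarrow(b)\Rightarrow\cdots\Rightarrow(e)\Rightarrow(a)$, mirroring the proof of Theorem \ref{thm2.7} with $(6),(7),(3^M)$ replaced by $(8),(9),(4^N)$ and with the group-inverse candidate $AX^2$ in place of $X^2A$ and the idempotents $P=AA^{\#}$, $Q=XA$ in place of $P=AX$, $Q=A^{\#}A$, produces those representations explicitly and is the more self-contained proof; this is exactly the trade-off you identify. One small slip: in the uniqueness chain for $Q$ you write $Q^*N=(Q_1Q)^*N$, which presupposes $Q=Q_1Q$, whereas the absorption identities coming from $\rg(Q^T)=\rg(Q_1^T)$ are $Q=QQ_1$ and $Q_1=Q_1Q$; the chain should read $NQ=Q^*N=(QQ_1)^*N=Q_1^*Q^*N=Q_1^*NQ=NQ_1Q=NQ_1$, which uses the same ingredients and is the analogue of the computation in Theorem \ref{2.11}. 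This is a transposition, not a gap.
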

\begin{corollary}\label{cor2.10}
Let $A\in\cnn$ be a core matrix and $M \in \cnn$ be an invertible hermitian matrices. Further, let $X\in\cnn$ with $(MAX)^*=MAX$. If $X$ satisfies either $XA^2=A$ or $XA=A^{\#}A$, then $A^{\core,M}=A^{\#}AX$.
\end{corollary}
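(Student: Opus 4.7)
The plan is to reduce the corollary to Theorem \ref{thm2.7}. That theorem asserts, in the equivalence (d), that once $A^{\#}$ exists and one has any $\{1,3^M\}$-inverse $Z$ of $A$, the $M$-weighted core inverse admits the representation $A^{\core,M}=A^{\#}AZ$. Since $A$ is a core matrix, $A^{\#}$ exists; and since $(MAX)^*=MAX$ is given by hypothesis, we already know $X\in A\{3^M\}$. Hence the entire task reduces to verifying the single equation $AXA=A$, after which the conclusion $A^{\core,M}=A^{\#}AX$ follows immediately by choosing $Z=X$.

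I would then handle the two alternative hypotheses in turn. Suppose first that $XA^2=A$. Left-multiplying by $A$ gives $AXA^2=A^2$, and right-multiplying by $A^{\#}$ together with the group-inverse identity $A^2A^{\#}=AA^{\#}A=A$ produces $AXA=A$, as required. Suppose instead that $XA=A^{\#}A$. Then left-multiplying by $A$ and using $AA^{\#}A=A$ immediately yields $AXA=AA^{\#}A=A$.

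In either case $X\in A\{1,3^M\}$, so Theorem \ref{thm2.7} supplies the representation $A^{\core,M}=A^{\#}AA^{(1,3^M)}=A^{\#}AX$. I do not anticipate any genuine obstacle here: the corollary is essentially a repackaging of the implication from (d) to the explicit formula in Theorem \ref{thm2.7}. The only mild point is to observe that each of the two alternative hypotheses, on its own, already delivers $AXA=A$, so one never needs the full $\{6,7\}$ data that was exploited in Proposition \ref{pro2.2}.
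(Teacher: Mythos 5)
Your proposal is correct and follows essentially the same route as the paper: in both cases the only thing to check is $AXA=A$ (via $AXA^2A^{\#}=A^2A^{\#}=A$ when $XA^2=A$, and directly when $XA=A^{\#}A$), after which Theorem \ref{thm2.7}(d) gives $A^{\core,M}=A^{\#}AX$. No gaps.
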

\begin{proof}
Let  $XA^2=A$. Then     $AXA=AXAA^{\#}A=AXA^2A^{\#}=A^2A^{\#}=A$. 
Thus $X\in A\{1,3^M\}$ and hence by Theorem \ref{thm2.7} $(d)$, $A^{\core,M}=A^{\#}AX$. Further, from $XA=A^{\#}A$, we obtain $AXA=A$. Which yields $X\in A\{1,3^M\}$ and hence the proof is complete.
\end{proof}

The following propositions can be used as an equivalent definition for $M$-weighted core inverse.
\begin{proposition}\label{prop11}
Let $M \in \cnn$ be an {invertible} hermitian matrix and  $A\in \cnn$ be a core matrix. If a matrix $X\in\cnn$ satisfies 
\begin{center}
$(3^M)$  $(MAX)^*=MAX$ and $(6)$ $ XA^2= A,$   \end{center}
then $X$ is the $M$-weighted core inverse of $A.$
\end{proposition}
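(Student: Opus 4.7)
The plan is to verify that $X$ already satisfies the three defining equations $(3^M)$, $(6)$, $(7)$ of Definition \ref{mcore}, and then apply the uniqueness result Theorem \ref{unimcore} to conclude $X = A^{\core,M}$. Since $(3^M)$ and $(6)$ are given by hypothesis, the entire content of the proof reduces to deriving $(7)$, namely $AX^2 = X$, from $(2)$ and $(6)$ (the condition $(3^M)$ will not be needed for this step).

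The starting observation is that $A$ is a core matrix, so $A^{\#}$ exists and obeys the familiar identities $AA^{\#} = A^{\#}A$ and $A^2A^{\#} = A$. Right-multiplying the hypothesis $XA^2 = A$ by $A^{\#}$ and collapsing $XA^2A^{\#}$ to $XA$ via $A^2A^{\#}=A$ yields the pivotal relation
\begin{equation*}
XA \;=\; AA^{\#}.
\end{equation*}
This is the only slightly non-obvious step, and it is where the core-matrix hypothesis enters.

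From here the remaining work is mechanical. Substituting $XA=AA^{\#}$ into $(2)$ gives $X = XAX = AA^{\#}X$, so $X$ factors through $AA^{\#}$. Similarly, $AXA = A(XA) = A\cdot AA^{\#} = A^{2}A^{\#} = A$, so $X$ is automatically a $\{1\}$-inverse of $A$. A direct computation then establishes $(7)$:
\begin{equation*}
AX^{2} \;=\; AX\cdot X \;=\; AX(AA^{\#}X) \;=\; (AXA)\,A^{\#}X \;=\; AA^{\#}X \;=\; X.
\end{equation*}

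Now $X$ satisfies all three conditions of Definition \ref{mcore}, and Theorem \ref{unimcore} forces $X = A^{\core,M}$. I anticipate no serious obstacle; the proof is essentially the identity $XA = AA^{\#}$ followed by bookkeeping. (As an alternative, one could instead invoke Corollary \ref{cor2.10} in place of the direct verification of $(7)$: since $XA^2 = A$ and $(MAX)^*=MAX$, that corollary already gives $A^{\core,M} = A^{\#}AX$, and $A^{\#}AX = AA^{\#}X = X$ closes the argument.)
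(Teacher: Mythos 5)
Your proof is correct, and it pivots on exactly the same key identity as the paper's: right-multiplying $XA^2=A$ by $A^{\#}$ and using $A^2A^{\#}=A$ to obtain $XA=AA^{\#}$, whence $AXA=A$. Where you diverge is in how the argument is closed. The paper, having shown $X\in A\{1,3^M\}$, invokes Corollary \ref{cor2.10} to get the representation $A^{\core,M}=A^{\#}AX=AA^{\#}X$ and then substitutes $AA^{\#}=XA$ to conclude $A^{\core,M}=XAX=X$ --- precisely the ``alternative'' you sketch in your closing parenthesis. Your primary route instead verifies the remaining defining equation $(7)$ directly, computing $AX^{2}=AX(AA^{\#}X)=(AXA)A^{\#}X=AA^{\#}X=X$, and then appeals only to Definition \ref{mcore} together with the uniqueness in Theorem \ref{unimcore}. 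This is slightly more self-contained: it bypasses Corollary \ref{cor2.10}, and therefore the machinery of Theorem \ref{thm2.7} on which that corollary rests, at the cost of one extra line of computation. Every intermediate step you use ($A^2A^{\#}=A$, $X=XAX=AA^{\#}X$, $AXA=A$) checks out, so both arguments are sound; the paper's is shorter given its earlier results, yours is the more elementary verification of the definition itself.
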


\begin{proof}
Let $A=XA^2$. Then $AA^{\#}= XA^2A^{\#}= XA$. Now ${A}  {X}  {A}= {A}  {A}  {A}^{\#}= {A}.$
 Therefore, $ {X}\in {A}\{1,3^M\}.$
 { So, by Theorem \ref{thm2.7},  we obtain $ {A}^{\core, M}= {X}.$ }
\end{proof}

\begin{proposition}\label{prop12}
Let $M \in \cnn$ be an {invertible} hermitian matrix and  $A\in \cnn$. If a matrix $X$ satisfies
\begin{center}
    $(1)$\  $ {A}  {X}  {A}= {A}$,\ $(3^M)$\  $(M {A}  {X})^*= M{A}  {X}$ and $(7)$\ $ {A}  {X}^2= {X},$ 
\end{center}
then $ {X}$ is the $M$-weighted core inverse of $ {A}.$
\end{proposition}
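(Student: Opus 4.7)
My plan is to reduce this to the hypotheses of Corollary \ref{cor2.10} (or equivalently to Definition \ref{mcore} directly). Since we already have $(3^M)$ and $(7)$ in the hypothesis, what remains is to extract the relation $XA = A^{\#}A$ (or equivalently $XA^2 = A$) from the three given equations. Once that is in hand, either Corollary \ref{cor2.10} applied to $X$ or the uniqueness statement in Theorem \ref{unimcore} finishes the argument.

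The first step I would carry out is to show that $X$ lies in the range of $A^{\#}A$ as a left projector, i.e.\ $A^{\#}AX = X$. Since $A$ is a core matrix, $A^{\#}A^2 = A$, so multiplying the hypothesis $AX^2 = X$ on the left by $A^{\#}A$ gives
\begin{equation*}
A^{\#}AX \;=\; A^{\#}A\cdot AX^2 \;=\; A^{\#}A^2\cdot X^2 \;=\; AX^2 \;=\; X.
\end{equation*}
The second step is then immediate: multiplying the identity $A^{\#}AX = X$ on the right by $A$ and using $AXA = A$ from hypothesis $(1)$,
\begin{equation*}
XA \;=\; A^{\#}AXA \;=\; A^{\#}A.
\end{equation*}
Multiplying once more on the right by $A$ gives $XA^2 = A^{\#}A^2 = A$, which is precisely condition $(6)$.

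At this point $X$ satisfies $(3^M)$, $(6)$ and $(7)$, so by Definition \ref{mcore} combined with the uniqueness proved in Theorem \ref{unimcore}, we conclude $X = A^{\core,M}$. Alternatively, one can finish more symmetrically by noting that $X$ satisfies the hypotheses of Corollary \ref{cor2.10} (having established $XA = A^{\#}A$ and $(MAX)^* = MAX$), so $A^{\core,M} = A^{\#}AX$, which equals $X$ by the first step.

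I do not expect a serious obstacle here; the only subtle point is recognizing that $(7)$ together with the index-one property $A^{\#}A^2 = A$ already forces $X = A^{\#}AX$, which is the bridge that lets $(1)$ deliver $XA = A^{\#}A$. Everything else is a one-line multiplication and an invocation of uniqueness.
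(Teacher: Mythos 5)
Your proposal is correct and follows essentially the same route as the paper: both arguments use the group inverse identity $A^{\#}A^2=A$ together with hypotheses $(7)$ and $(1)$ to derive $XA=A^{\#}A$ and hence $XA^2=A$, after which $X$ satisfies $(3^M)$, $(6)$, $(7)$ and is the $M$-weighted core inverse. The only difference is cosmetic ordering: the paper substitutes $X=AX^2$ into $AXA=A$ to get $A=A^2X^2A$ and then pre-multiplies by $A^{\#}$, while you first establish $A^{\#}AX=X$ and then right-multiply by $A$.
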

\begin{proof}
{
From  $ {A}  {X}  {A}= {A}$ and $ {A}  {X}^2= {X}$, we get  $ {A}= {A}^2  {X}^2  {A}.$ Thus $\rg(A)\subseteq \rg(A^2)$ and subsequently $A$ is group invertible.  Hence by Theorem \ref{thm2.7}, ${X}$ is the $M$-weighted core inverse of the matrix $ {A}.$}
\end{proof}

It is well known that the generalized weighted Moore-Penrose inverse does not exist always \cite{SheGuo07}. But {if we replace the invertibility by the positive definiteness for matrices $M$ and $N$,} then $A^\dagger_{M,N}$ always exists and we call it the weighted Moore-Penrose inverse \cite{BenIsrael03} of the matrix $A$. 

\begin{theorem}\label{2.11}
Let $A\in \cnn$ and $M,~N \in \cnn$ be invertible hermitian matrices. Then the following are equivalent:
\begin{enumerate}
\item [(a)] $A^\dagger_{M,N}$ exists;
\item [(b)] there exist unique idempotent matrices $P,~Q \in \cnn$ such that $MP=(MP)^*$, $NQ=(NQ)^*$, $\rg(P)=\rg(A)$ and $\rg(Q^T)=\rg(A^T)$.
\end{enumerate} 
moreover, if any one of the statements $(a)-(b)$ holds, then  $A^\dagger_{M,N}=QA^{(1)}P$. 
\end{theorem}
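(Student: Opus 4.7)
The plan is to parallel the arguments of Theorems~\ref{thm2.7} and \ref{thm2.9}, handling the two hermiticity conditions on $M$ and $N$ symmetrically.

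For the implication $(a)\Rightarrow(b)$, I would set $P:=AX$ and $Q:=XA$ with $X:=A^{\dagger}_{M,N}$. Idempotency of $P$ and $Q$ follows immediately from $XAX=X$; the axioms $(3^{M})$ and $(4^{N})$ of Definition~\ref{wmpi} read exactly as $MP=(MP)^*$ and $NQ=(NQ)^*$; and the range equalities $\rg(P)=\rg(A)$, $\rg(Q^T)=\rg(A^T)$ come from $A=AXA=PA=AQ$ together with the trivial inclusions $\rg(P)\subseteq\rg(A)$ and $\rg(Q^T)\subseteq\rg(A^T)$. For uniqueness, if $P_1$ is another candidate, the shared range forces $P_1P=P$ and $PP_1=P_1$. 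Converting the hermiticity conditions to $P^*=MPM^{-1}$ and $P_1^*=MP_1M^{-1}$, taking the adjoint of $PP_1=P_1$ and substituting gives $MP_1PM^{-1}=MP_1M^{-1}$, hence $P_1P=P_1$, and combined with $P_1P=P$ we obtain $P=P_1$. The uniqueness of $Q$ is the symmetric argument: the row-space equality yields $\mathcal{N}(Q)=\mathcal{N}(Q_1)$, which forces $Q=QQ_1$ and $Q_1=Q_1Q$, after which the dual adjoint manipulation produces $Q=Q_1$.

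For the implication $(b)\Rightarrow(a)$, I would define $X:=QA^{(1)}P$ for any fixed $\{1\}$-inverse of $A$. The range hypotheses deliver four clean reductions: $PA=A$ and $AQ=A$ from the direct inclusions, together with $AA^{(1)}P=P$ and $QA^{(1)}A=Q$, obtained by comparing the idempotents $(P,AA^{(1)})$ and $(Q,A^{(1)}A)$. These collapse the four defining equations at once: $AXA=AA^{(1)}A=A$; the identities $AX=P$ and $XA=Q$ supply $(MAX)^*=MP=MAX$ and $(NXA)^*=NQ=NXA$; and $XAX=X\cdot AX=QA^{(1)}P\cdot P=X$. This simultaneously delivers the representation $A^{\dagger}_{M,N}=QA^{(1)}P$.

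The main obstacle I anticipate is the uniqueness of $Q$: the hypothesis $\rg(Q^T)=\rg(A^T)$ is only a row-space condition, so one must first translate it into $\mathcal{N}(Q)=\mathcal{N}(Q_1)$, then use idempotency to obtain $Q=QQ_1$ and $Q_1=Q_1Q$, and finally invoke $Q^*=NQN^{-1}$ in an adjoint computation symmetric to the $P$-case. Once this bookkeeping is handled, every remaining verification is a one-line substitution.
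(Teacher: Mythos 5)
Your proposal is correct and follows essentially the same route as the paper: for $(a)\Rightarrow(b)$ both take $P=AX$, $Q=XA$ with $X=A^{\dagger}_{M,N}$ and derive uniqueness from the range conditions (which give $P_1P=P$, $PP_1=P_1$ and the analogous relations for $Q$) combined with the hermiticity of $MP$ and $NQ$; for $(b)\Rightarrow(a)$ both set $X=QA^{(1)}P$ and verify the four defining equations via $PA=A=AQ$, $AA^{(1)}P=P$, $QA^{(1)}A=Q$. The only cosmetic difference is that you phrase $AA^{(1)}P=P$ as a comparison of idempotents with equal range, whereas the paper writes $P=AU$ and computes directly.
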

\begin{proof}
$(a)\Rightarrow (b)$ Let $X=A^\dagger_{M,N}$. If we define $P=AX$ and $Q=XA$, then we can easily show that $\rg(P)=\rg(A)$ and $\rg(Q^T)=\rg(A^T)$. Further $P=AX=AXAX=P^2$, $Q=XA=XAXA=Q^2$, $MP=MAX=(MAX)^*=(MP)^*$, and $NQ=NXA=(NQ)^*$. To show the uniqueness of $P$ and $Q$, assume that {there exist} two idempotent pairs $(P,Q)$ and $(P_1,Q_1)$ which satisfies $(b)$. Now from $\rg(Q^T)=\rg(Q_1^T)$, we have $Q=QQ_1$ and $Q_1=Q_1Q$. Using $Q=QQ_1$ and $Q_1=Q_1Q$, we obtain
\begin{center}
    $NQ=(NQ)^*={(NQN^{-1}NQ_1)^*}=NQ_1N^{-1}NQ=NQ_1Q=NQ_1$.
\end{center}
Pre-multiplying $N^{-1}$, we get $Q=Q_1$. Similarly, we can show the uniqueness of $P$.\\
$(b)\Rightarrow (a)$ Let $P$ and $Q$ be the unique idempotent matrices with $MP=(MP)^*$, $NQ=(NQ)^*$, $\rg(P)=\rg(A)$ and $\rg(Q^T)=\rg(A^T)$. Then $A=PA=AQ$, $P=AU$, and $Q=VA$ for some $U, ~V\in\cnn$. Further, $P=AA^{(1)}AU=AA^{(1)}P$ and $Q=QA^{(1)}A$. Now, consider $X=QA^{(1)}P$. Then $A^\dagger_{M,N}=X=QA^{(1)}P$ is follows from the following verification:
\begin{center}
    $AXA=AQA^{(1)}PA=A,~~XAX=QA^{(1)}PAQA^{(1)}P=X$;
\end{center}
\begin{center}
    $MAX=MAQA^{(1)}P=MP=(MAX)^*,~~NXA=NQA^{(1)}PA=NQ=(NXA)^*$.
\end{center}
\end{proof}

\begin{theorem}\label{thm2.12}
Let $A\in\cnn$ and $M \in \cnn$ be an {hermitian and positive definite matrix}. If $ind(A)=1$ and {$A\{1,3^M\} \neq \phi$,} then the following five conditions are true.
\begin{enumerate}
    \item [(a)] ($A^{\core,M})^{\#}=A^2A^{\core,M}=(A^{\core,M})_{M,M}^{\dagger}=(A^{\core,M})^{\core,M}=(A^{\core,M})^{M,\core}$;
    \item[(b)] ($A^{\#})^{\core,M}=A^2A^{\core,M};$
    \item[(c)] $A^{\#}=(A^{\core,M})^2A$;
    \item[(d)] $(A^n)^{\core,M}=(A^{\core,M})^n$ for any $n\in\mathbb{N}$;
    \item[(e)] $[(A^{\core,M})^{\core,M}]^{\core,M}=A^{\core,M}$.
\end{enumerate}
\end{theorem}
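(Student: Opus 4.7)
Let $X = A^{\core,M}$ throughout. My plan is to pull out five ``working identities'' once and then use them everywhere. From Theorem \ref{thm2.7} and Proposition \ref{pro2.2}, $X$ satisfies $XA^2=A$, $AX^2=X$, $(MAX)^*=MAX$, $AXA=A$, $XAX=X$, and $XA = A^{\#}A = AA^{\#}$. Two useful consequences that I will establish by a one-line induction at the start of the proof are
\[
XA^{k+1}=A^k \quad\text{and}\quad AX^{k+1}=X^k \qquad (k\ge 1),
\]
from which a telescoping argument yields the central identity $A^{n+1}X^{n+1}=AX$ for every $n\ge 0$. Part (c) is already contained in the statement of Theorem \ref{thm2.7}, so nothing needs to be done there.

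For part (a), my claim is that the single matrix $Y:=A^2X$ simultaneously realizes all four inverses of $X$. The computations begin by noting that $XY = X\cdot A^2X = AX$ (by $XA^2=A$) and $YX = A^2X\cdot X = A^2X^2 = AX$ (by $AX^2=X$); so $XY=YX=AX$. This immediately gives $(MXY)^* = (MYX)^* = (MAX)^* = MAX$. Using these, the verifications are short: $XYX = AX\cdot X = AX^2 = X$ and $YXY = A^2X\cdot AX = A(XA^2)X\cdot (AX)/A \ldots$ — more cleanly, $YXY = A^2X\cdot AX = A(XA)\cdot AX = A\cdot AA^{\#}\cdot AX = A^2X = Y$. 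The group-inverse axioms hold, so $X^{\#}=A^2X$; the other three axiom sets are satisfied by the same $Y$, giving $X^{\core,M}=X^{M,\core}=X^{\dagger}_{M,M}=A^2X$.

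Part (b) asks that $A^2X$ be the $M$-weighted core inverse of $A^{\#}$. Using Theorem \ref{thm2.7}, the defining conditions (6), (7), $(3^M)$ applied to $A^{\#}$ reduce to: $(A^2X)(A^{\#})^2 = A^{\#}$, $A^{\#}(A^2X)^2 = A^2X$, and $(MA^{\#}(A^2X))^* = MA^{\#}(A^2X)$. The second collapses via $A^{\#}A^2 = A$ and $XA^2=A$, and the third via $A^{\#}A^2X = AX$ and hermiticity of $MAX$. The first requires substituting $A^{\#}=X^2A$ (Theorem \ref{thm2.7}) and using $AX^3 = X^2$ and $XAX=X$ to telescope $A^2X^3AX^2A$ down to $X^2A=A^{\#}$. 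For part (d) I will induct on $n$, using the working identities to show $X^{n+1}A^{2(n+1)}=A^{n+1}$, $A^{n+1}X^{2(n+1)}=X^{n+1}$, and $(MA^{n+1}X^{n+1})^* = (MAX)^* = MAX = MA^{n+1}X^{n+1}$, which is exactly the defining system for $(A^{n+1})^{\core,M}$.

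Finally, part (e) is a two-line consequence of (a) and (b): by (a), $(A^{\core,M})^{\core,M} = A^2X = X^{\#}$; applying (b) with $A$ replaced by $X$ gives $(X^{\#})^{\core,M} = X^2\cdot X^{\core,M} = X^2\cdot A^2X$ (the last equality again by (a)), and $X^2A^2X = X(XA^2)X = XAX = X$. I expect the main obstacle to be bookkeeping rather than ideas: part (a) requires checking around a dozen axioms for four different inverses, and the temptation is to do them in parallel. The clean path is to isolate the two identities $XY=YX=AX$ and $A^{n+1}X^{n+1}=AX$ first, so that every remaining check is a one- or two-step application of $XA^2=A$, $AX^2=X$, $XAX=X$, and the hermiticity of $MAX$.
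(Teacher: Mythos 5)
Your proposal is correct, and at the level of strategy it matches the paper: both proofs take the single candidate $Y=A^2A^{\core,M}$ and verify, axiom by axiom, that it is simultaneously the group, weighted Moore--Penrose, $M$-weighted core, and $M$-weighted dual core inverse of $A^{\core,M}$, with (c) read off from Theorem \ref{thm2.7} and (d) done by induction. The differences are organizational but worth noting. The paper routes every computation through the representation $A^{\core,M}=A^{\#}AA^{(1,3^M)}$, so each check is a manipulation of strings in $A^{\#}$ and $A^{(1,3^M)}$; you instead work purely from the defining equations $XA^2=A$, $AX^2=X$, $(MAX)^*=MAX$ together with the derived $\{1,2\}$-properties, and the identities $XY=YX=AX$ and $A^{n+1}X^{n+1}=AX$ do collapse essentially every check to one or two lines --- this is genuinely tidier and avoids invoking a $(1,3^M)$-inverse at all. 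Your part (e) is also slicker: the paper verifies from scratch that $A^{\core,M}$ is the $M$-weighted core inverse of $A^2A^{\core,M}$, whereas you get it by applying (b) to $X=A^{\core,M}$ in place of $A$; just make explicit that $X$ satisfies the hypotheses of (b) (it is a core matrix with $X\{1,3^M\}$ nonempty, both of which follow from (a)). Two small blemishes, neither fatal: the displayed regrouping $A^2X\cdot AX=A(XA)\cdot AX$ in your computation of $YXY$ is only legitimate because $AXA=A$ forces $A^2XA=AXA^2=A^2$ (the cleaner route is $YXY=A^2X^2A^2X=AXA^2X=A^2X$), and in (d) you should say a word about why $ind(A^{n+1})=1$ so that $(A^{n+1})^{\core,M}$ is defined.
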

\begin{proof}
$(a)$ From Theorem \ref{thm2.7}, we obtain $A^{\core,M}=A^{\#}AA^{(1,3^M)}$. Now, let $X=A^2A^{\core,M}=A^2A^{\#}AA^{(1,3^M)}=A^2A^{(1,3^M)}$. Then 
\begin{equation}\label{eq21}
    A^{\core,M}XA^{\core,M}=A^{\#}AA^{(1,3^M)}XA^{\#}AA^{(1,3^M)}=A^{\#}AA^{(1,3^M)}=A^{\core,M},
\end{equation}
\begin{equation}\label{eq22}
    XA^{\core,M}X= XA^{\#}AA^{(1,3^M)}X=A^2A^{(1,3^M)}A^{\#}AA^{(1,3^M)}A^2A^{(1,3^M)}=A^2A^{(1,3^M)}=X, 
\end{equation}
 and 
 \begin{equation}\label{eq23}
     A^{\core,M}X=A^{\#}AA^{(1,3^M)}AAA^{(1,3^M)}=A^2A^{\#}A^{(1,3^M)}=AAA^{(1,3^M)}AA^{\#}A^{(1,3^M)}=XA^{\core,M}.
 \end{equation}
From equations \eqref{eq21}-\eqref{eq23}, we obtain $\left(A^{\core,M}\right)^{\#}=X=A^2A^{\core,M}$. Further, we have 
\begin{eqnarray}\label{eq24}
\nonumber
(MA^{\core,M}X)^*&=&(MA^{\#}AA^{(1,3^M)}A^2A^{(1,3^M)})^*=(MAA^{(1,3^M)})^*=MAA^{(1,3^M)}\\
&=&MAA^{\#}AA^{(1,3^M)}=MA^{\#}AA^{(1,3^M)}A^2A^{(1,3^M)}=MA^{\core,M}X,
\end{eqnarray}
 and 
 \begin{eqnarray}\label{eq25}
 \nonumber
 (MXA^{\core,M})^*&=&(MA^2A^{(1,3^M)}A^{\core,M})^*=(MA^2A^{(1,3^M)}A^{\#}AA^{(1,3^M)})^*=(MAA^{(1,3^M)})^*\\
 &=&MAA^{(1,3^M)}=MAA^{\#}AA^{(1,3^M)}=MA^2A^{(1,3^M)}A^{\core,M}=MXA^{\core,M}.
 \end{eqnarray}
 Combining equations \eqref{eq21} and \eqref{eq22} along with \eqref{eq24} and \eqref{eq25}, we get ${\left(A^{\core,M}\right)^\dagger_{M,M}}=X=A^2A^{\core,M}$. Next we will show $(A^{\core,M})^{\core,M}=X$. Using the definition of $M$-weighted core inverse, we have  $A^{\core,M}X^2=(A^{\core,M}A^2)(A^{\core,M}A^2)A^{\core,M}=AAA^{\core,M}=X$, and 
 $X(A^{\core,M})^2=A^2\left(A^{\core,M}\right)^3=A\left(A^{\core,M}\right)^2=A^{\core,M}.$ Hence $(A^{\core,M})^{\core,M}=X$ since $(MA^{\core,M}X)^*=MA^{\core,M}X$. To claim the final part of $(a)$, it is enough to show $\left(A^{\core,M}\right)^2X=A^{\core,M}$ and $X^2A^{\core,M}=X$. Using Proposition \ref{pro2.5} and the definition of $M$-weighted core inverse, we obtain
 \begin{center}
     $\left(A^{\core,M}\right)^2X=\left(A^{\core,M}\right)^2A^2A^{\core,M}=A^{\core,M}AA^{\core,M}=A^{\core,M}$, and
 \end{center}
 \begin{center}
   $X^2A^{\core,M}=A^2A^{\core,M}A^2\left(A^{\core,M}\right)^2=A^2A^{\core,M}AA^{\core,M}=A^2A^{\core,M}=X$.  
 \end{center}
 $(b)$ Let $X=A^2A^{\core,M}$. Then the result is follows from the following expressions
 \begin{center}
     $A^{\#}X^2=A^{\#}A^2A^{\core,M}A^2A^{\core,M}=AAA^{\core,M}=X$,
 \end{center}
 \begin{center}
     $X(A^{\#})^2=A^2A^{\core,M}(A^{\#})^2=A^2A^{\core,M}A^2(A^{\#})^4=A^3(A^{\#})^4=A^{\#}$, and
 \end{center}
 \begin{center}
     $(MA^{\#}X)^*=(MA^{\#}A^2A^{\core,M})^*=(MAA^{\core,M})^*=MAA^{\core,M}=MA^{\#}X$.
 \end{center}
 $(c)$ From Theorem \ref{thm2.7}, the result is trivially holds.\\
 $(d)$ First we will show for $n=2$, i.e.,  $(A^2)^{\core,M}=(A^{\core,M})^2$. Since ${A^2(A^{\core,M})^4}=A(A^{\core,M})^3=(A^{\core,M})^2$, $(A^{\core,M})^2A^4=A^{\core,M}A^3=A^2$, and $(MA^2(A^{\core,M})^2)^*=(MAA^{\core, M})^*=MAA^{\core,M}=MA^2(A^{\core,M})^2$, so it follows that $(A^2)^{\core,M}=(A^{\core,M})^2$. Now assume that the result is true for $n=k$, i.e., $(A^k)^{\core,M}=(A^{\core,M})^k$. Next we will claim for $n=k+1$. From
\begin{center}
    $A^{k+1}(A^{\core,M})^{2k+2}=A^k(A^{\core,M})^{2k+1}=(A^{\core,M})^kA^{\core,M}=(A^{\core,M})^{k+1}$,
\end{center}
\begin{center}
    $(A^{\core,M})^{k+1}A^{2k+2}=(A^{\core,M})^kA^{2k+1}=A^kA=A^{k+1}$, and 
\end{center}
\begin{center}
    $\left(MA^{k+1}(A^{\core,M})^{k+1}\right)^*=\left(MA^{k}(A^{\core,M})^{k}\right)^*=MA^{k}(A^{\core,M})^{k}=MA^{k+1}(A^{\core,M})^{k+1}$, 
\end{center}
we obtain  $(A^{k+1})^{\core,M}=(A^{\core,M})^{k+1}$. Hence {it is true} for any $n\in \mathbb{N}$. \\
$(e)$ From part $(a)$, we have $(A^{\core,M})^{\core,M}=A^2A^{\core,M}$. So it is {sufficient} to show $(A^2A^{\core,M})^{\core,M}=A^{\core,M}$. Now 
$A^2A^{\core,M}(A^{\core,M})^2=A(A^{\core,M})^2=A^{\core,M}$,
 $A^{\core,M}A^2A^{\core,M}A^2A^{\core,M}=AAA^{\core,M}=A^2A^{\core,M}$, and $(MA^2A^{\core,M}A^{\core,M})^*=(MAA(A^{\core,M})^2)^*=(MAA^{\core,M})^*=MAA^{\core,M}=MA^2A^{\core,M}A^{\core,M}$. Thus completes the proof. 
 \end{proof}
Using the method used in proving the above Theorem \ref{thm2.12}, we can establish the following result for $N$-weighted dual core inverse. 
\begin{theorem}\label{thm2.13}
Let $A\in\cnn$ and $N \in \cnn$ be a
{hermitian and positive definite matrix}. If $ind(A)=1$ and {$A\{1,4^N\}\neq \phi$,} then the following holds
\begin{enumerate}
    \item [(a)] $(A^{N,\core})^{\#}=A^{N,\core}A^2=(A^{N,\core})_{N,N}^{\dagger}=(A^{N,\core})^{N,\core}=(A^{N,\core})^{\core,N}$;
    \item[(b)]  $(A^{\#})^{N,\core}=A^{N,\core}A^2$;
    \item[(c)] $A^{\#}=A(A^{N,\core})^2$;
    \item[(d)] $(A^n)^{N,\core}=(A^{N,\core})^n$ for any $n\in\mathbb{N}$;
    \item[(e)] $[(A^{N,\core})^{N,\core}]^{N,\core}=A^{N,\core}$.
\end{enumerate}
\end{theorem}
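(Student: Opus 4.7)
The plan is to follow the proof of Theorem \ref{thm2.12} step by step, swapping the role of left and right multiplication and using Theorem \ref{thm2.9} and Proposition \ref{pro2.5} in place of Theorem \ref{thm2.7} and Proposition \ref{pro2.2}. An alternative, more conceptual route is to invoke Lemma \ref{lem2.8}: with $M=N$ and $B=N^{-1}A^*N$, the lemma identifies $A^{N,\core}=N^{-1}(B^{\core,N})^*N$, so each identity proved in Theorem \ref{thm2.12} for $B^{\core,N}$ can be transported to $A^{N,\core}$ by taking conjugate transposes and pre-/post-multiplying by $N^{\pm 1}$. I will pursue the direct verification, since it is the most parallel to the preceding proof.

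For part $(a)$, set $X=A^{N,\core}A^2=A^{(1,4^N)}A^2$, where the second equality uses the representation $A^{N,\core}=A^{(1,4^N)}A A^{\#}$ from Theorem \ref{thm2.9}. I then verify, analogously to equations \eqref{eq21}--\eqref{eq25}, that
\begin{align*}
A^{N,\core} X A^{N,\core} &= A^{N,\core}, \qquad X A^{N,\core} X = X,\qquad A^{N,\core} X = X A^{N,\core},\\
(N A^{N,\core} X)^* &= NA^{N,\core} X, \qquad (N X A^{N,\core})^* = N X A^{N,\core}.
\end{align*}
Taken together, these five identities yield $(A^{N,\core})^{\#}=X=(A^{N,\core})^{\dagger}_{N,N}$. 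The remaining two equalities $(A^{N,\core})^{N,\core}=X$ and $(A^{N,\core})^{\core,N}=X$ then follow from the absorbing relations $A^{N,\core}X^2=A^{N,\core}$, $X(A^{N,\core})^2=A^{N,\core}$, $X^2 A^{N,\core}=X$, $(A^{N,\core})^2 X=X$, combined with the hermiticity already established.

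For $(b)$--$(e)$, the arguments are direct dualizations. In $(b)$ I verify the three defining identities $(A^{\#})^2 X = A^{\#}$, $X^2 A^{\#} = X$, and $(NXA^{\#})^* = NXA^{\#}$ for $X=A^{N,\core}A^2$. Part $(c)$ is immediate from Theorem \ref{thm2.9}. Part $(d)$ is by induction on $n$, checking the base case $n=2$ via the three-identity verification for $(A^{N,\core})^2$ against $A^2$, and then propagating the induction step along the same lines. Part $(e)$ combines $(a)$, which identifies $(A^{N,\core})^{N,\core}$ with $A^{N,\core}A^2$, with a direct computation that $(A^{N,\core}A^2)^{N,\core}=A^{N,\core}$.

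The main obstacle is bookkeeping: keeping track of which defining identities belong to the $N$-weighted dual core (namely $(8)$, $(9)$, and $(4^N)$) as opposed to the $M$-weighted core, and handling the reversed order of multiplication, where $A^2 X=A$ and $X^2 A=X$ replace $X A^2=A$ and $A X^2=X$. Apart from this, each verification reduces to a short algebraic manipulation using $A^2 A^{N,\core}=A$, $(A^{N,\core})^2 A=A^{N,\core}$, and the derived relation $A^{\#}A=A A^{N,\core}$ that follows from part $(c)$.
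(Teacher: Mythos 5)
Your proposal takes essentially the same route as the paper: the paper offers no written proof of this theorem, stating only that it follows ``using the similar lines'' as Theorem \ref{thm2.12}, and your plan is precisely that dualization (your alternative via Lemma \ref{lem2.8} is a nice conceptual shortcut the paper does not exploit, but you do not pursue it). One correction to your list of auxiliary identities in part $(a)$: with $X=A^{N,\core}A^2$, the relations you need are $A^{N,\core}X^2=X$ and $(A^{N,\core})^2X=A^{N,\core}$ (together with $X(A^{N,\core})^2=A^{N,\core}$ and $X^2A^{N,\core}=X$, which you state correctly); you have written $A^{N,\core}X^2=A^{N,\core}$ and $(A^{N,\core})^2X=X$, and both of these are false as stated --- the right-hand sides are swapped. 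The correct versions follow at once from $(A^{N,\core})^2A=A^{N,\core}$ and $A^{N,\core}AA^{N,\core}=A^{N,\core}$, and they are exactly conditions $(7)$ and $(8)$ needed to identify $X$ as $(A^{N,\core})^{\core,N}$ and $(A^{N,\core})^{N,\core}$ respectively, so the slip is cosmetic rather than structural; the rest of the outline is sound.
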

\begin{corollary}
Let $A\in\cnn$ be a core matrix. If $M, ~N \in \cnn$ are {hermitian and positive definite matrices} and {$A\{1,3^M,4^N\} \neq \phi$,} then the following holds: 
\begin{enumerate}
     \item [(a)] $(A^{\#})_{M,N}^{\dagger}=A^{N,\core }A^3A^{\core,M}$;
    \item[(b)] $A^{\#}=A^{\core,M}AA^{N,\core}$. 
\end{enumerate}
\end{corollary}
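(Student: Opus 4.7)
My plan is to reduce both parts to two identities already established in Theorems \ref{thm2.7} and \ref{thm2.9}, namely $A^{\core,M}A=A^{\#}A$ and $AA^{N,\core}=AA^{\#}$, combined with the representation formula from Theorem \ref{2.11}. The hypothesis that $A\{1,3^M,4^N\}$ is nonempty (clearly the intended reading) forces in particular that both $A^{\core,M}$ and $A^{N,\core}$ exist, via Theorems \ref{thm2.7}(d) and \ref{thm2.9}(d), so these identities are available from the start.

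Part (b) should come out in a one-line chain. Starting from $A^{\core,M}AA^{N,\core}$, I would first substitute $A^{\core,M}A=A^{\#}A$ to get $A^{\#}AA^{N,\core}$, then substitute $AA^{N,\core}=AA^{\#}$ to get $A^{\#}AA^{\#}$, which collapses to $A^{\#}$ by the defining group-inverse identity. Nothing else is needed.

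For part (a), my plan is to invoke Theorem \ref{2.11} with $A^{\#}$ playing the role of $A$. Because $ind(A)=1$, we have $\rg(A^{\#})=\rg(A)$ and $\rg((A^{\#})^T)=\rg(A^T)$. Consequently the idempotents $P=AA^{\core,M}$ (from Theorem \ref{thm2.7}(e)) and $Q=A^{N,\core}A$ (from Theorem \ref{thm2.9}(e)) are exactly the unique idempotents required by Theorem \ref{2.11} for $A^{\#}$: they satisfy $MP=(MP)^*$, $NQ=(NQ)^*$, $\rg(P)=\rg(A^{\#})$ and $\rg(Q^T)=\rg((A^{\#})^T)$. Theorem \ref{2.11} then delivers existence of $(A^{\#})^\dg_{M,N}$ together with the representation $(A^{\#})^\dg_{M,N}=Q(A^{\#})^{(1)}P$ for any $\{1\}$-inverse of $A^{\#}$.

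The only step requiring a moment of recognition, and thus the main potential obstacle, is choosing a convenient $\{1\}$-inverse of $A^{\#}$: I would take $A$ itself, since $A^{\#}AA^{\#}=A^{\#}$ is the group-inverse equation, so $A\in (A^{\#})\{1\}$. Substituting then produces $(A^{\#})^\dg_{M,N}=Q\cdot A\cdot P=A^{N,\core}A\cdot A\cdot AA^{\core,M}=A^{N,\core}A^3A^{\core,M}$, as required. Beyond this, the corollary is simply a packaging of the representation theorems proven earlier, and I expect no further difficulty apart from keeping the left/right roles of $A^{\core,M}$ and $A^{N,\core}$ straight, since the two one-sided notions are dual to one another.
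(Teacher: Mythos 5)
Your proof is correct. Part (b) is essentially the paper's own argument: the paper writes $A^{\#}=A^{\#}AA^{\#}=(A^{\core,M})^2A^3(A^{N,\core})^2$ and contracts using $A^{\#}=(A^{\core,M})^2A$ and $A^{\#}=A(A^{N,\core})^2$ from Theorems \ref{thm2.12}(c) and \ref{thm2.13}(c), while you use the equivalent identities $A^{\core,M}A=A^{\#}A$ and $AA^{N,\core}=AA^{\#}$ recorded at the end of Theorems \ref{thm2.7} and \ref{thm2.9}; either contraction is a one-liner. For part (a) the paper is far terser: it simply cites Theorem \ref{thm2.12}(b) and Theorem \ref{thm2.13}(b), i.e.\ $(A^{\#})^{\core,M}=A^2A^{\core,M}$ and $(A^{\#})^{N,\core}=A^{N,\core}A^2$, leaving implicit the fact that $B^{\dagger}_{M,N}=B^{N,\core}BB^{\core,M}$ whenever both one-sided core inverses of $B$ exist, from which $(A^{\#})^{\dagger}_{M,N}=A^{N,\core}A^2\cdot A^{\#}\cdot A^2A^{\core,M}=A^{N,\core}A^3A^{\core,M}$. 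Your route instead applies Theorem \ref{2.11} directly to $A^{\#}$, exhibiting $P=AA^{\core,M}$ and $Q=A^{N,\core}A$ as the required idempotents (valid because $\rg(A^{\#})=\rg(A)$ and $\rg((A^{\#})^T)=\rg(A^T)$ for a core matrix) and choosing $A\in(A^{\#})\{1\}$ via $A^{\#}AA^{\#}=A^{\#}$. This is a genuinely more self-contained derivation: it supplies the existence of $(A^{\#})^{\dagger}_{M,N}$ and the formula in one stroke, whereas the paper's citation presupposes the unstated Urquhart-type identity; the paper's version, on the other hand, makes visible the pleasant fact that the answer is just the weighted MP-inverse formula applied to the explicit core inverses of $A^{\#}$. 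You also correctly read the hypothesis $A\{1,3^M,4^N\}=\phi$ as the (clearly intended) nonemptiness condition.
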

\begin{proof}
$(a)$ {Let $X=A^2A^{\core,M}$ and $Y=A^{N,\core}A^2$. 
Then by Theorem \ref{thm2.7}, \ref{thm2.12} $(b)$, and  \ref{thm2.13} we have
\begin{eqnarray*}
&&A^\# X A^\# = A^\# = A^\#YA^\#,\\ 
&&X A^\# X =X,~~Y A^\#Y=Y,\\
&&    (MA^\#X)^*=MAX,~~(NYA^\#)^*=NYA^\#,
\end{eqnarray*}
Now consider $Z=A^{N,\core} A^3A^{\core,M} = YA^{\core}X$. Then $(A^{\#})_{M,N}^{\dagger}=Z$ follows from the following verification:
\begin{eqnarray*}
&&A^{\#}ZA^{\#}=A^{\#}YA^{\#}XA^{\#}=A^{\#}XA^{\#}=A^{\#},\\
&&ZA^{\#}Z=YA^{\#}XA^{\#}YA^{\#}X=YA^{\#}YA^{\#}X=YA^{\#}X=Z,\\
&&(MA^{\#}Z)^*=(MA^{\#}YA^{\#}X)^*=(MA^{\#}X)^*=MA^{\#}X=MA^{\#}Z,\\
&&(NZA^{\#})^*=(NYA^{\#}XA^{\#})^*=(NYA^{\#})^*=NYA^{\#}=NZA^{\#}.
\end{eqnarray*}
}
$(b)$ Using Theorem \ref{thm2.12} $(c)$ and Theorem \ref{thm2.13} $(b)$, we have 
\begin{center}
    $A^{\#}=A^{\#}AA^{\#}=(A^{\core, M})^2A^3(A^{N,\core})^2=A^{\core, M}A^2A^{N,\core}A^{N,\core}=A^{\core, M}AA^{N,\core}$.
\end{center}
\end{proof}
The following example illustrates that generalized Moore-Penrose inverse, group inverse, 
$M$-weighted core inverse and $N$-weighted dual-core inverses are different. 
\begin{example}
Let $A=\begin{bmatrix}
1 & 0 & 1\\
0 & 1 & 0\\
0 & 0 & 0
\end{bmatrix}$ and $M=N=\begin{bmatrix}
1 & 0 & 0\\
0 & 2 & 0\\
0 & 0 & 1
\end{bmatrix}$. Then we can easily verify that $A^{\#}=A$,~{$A^{\dagger}_{M,N}=\begin{bmatrix}
0.5 & 0 & 0\\
0 & 1 & 0\\
0.5 & 0 & 0
\end{bmatrix}$},~ $A^{\core,M}=\begin{bmatrix}
1 & 0 & 0\\
0 & 1 & 0\\
0 & 0 & 0
\end{bmatrix}$, and $A^{N,\core}=\begin{bmatrix}
0.5 & 0 & 0.5\\
0 & 1 & 0\\
0.5 & 0 & 0.5
\end{bmatrix}$.
\end{example}

At this point, one can ask a natural question: under which assumption, all these inverses are the same $?$ The following theorem will give an appropriate answer to this, along with a few representations of weighted-EP matrices.

\begin{theorem}\label{thm2.16}
Let $A\in\cnn$ and $ind(A)=1$. If $M$ is an {hermitian and positive definite matrix}, then the following eight statements are equivalent: 
\begin{enumerate}
\item[(a)] The matrix $A$ is weighted-EP {with respect to $(M,M)$};
\item[(b)] there exist a matrix $X\in \cnn$ such that $(MAX)^*=MAX$, $A^2X=A$, $X^2A=X$;
\item[(c)] there exist a matrix $X\in \cnn$ such that $(MXA)^*=MXA$, $XA^2=A$, $AX^2=X$;
\item[(d)]  $A^{\core,M}=A^{\#}=A^{M,\core}$;
\item[(e)] $A^{\core,M}=A^{\dagger}_{M,M}=A^{M,\core}$;
\item[(f)] $\rg(A^T)\subseteq\rg\left((A^*M)^T\right)$;
\item[(g)] $\rg(A)\subseteq\rg\left(M^{-1}A^*\right)$;
\item[(h)] $\rg(A^*)\subseteq\rg(MA)$.
\end{enumerate}
\end{theorem}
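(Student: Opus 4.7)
The plan is to partition the eight conditions into three clusters and cross-link them using the representation results of Theorems \ref{thm2.7}, \ref{thm2.9}, and \ref{thm2.12}. For the first cluster, showing (a) $\Leftrightarrow$ (d) $\Leftrightarrow$ (e): the implication (a) $\Rightarrow$ (d) is obtained by substituting $A^{(1,3^M)} = A^{(1,4^M)} = A^\dagger_{M,M} = A^\#$ into the formulas $A^{\core,M} = A^\# A A^{(1,3^M)}$ and $A^{M,\core} = A^{(1,4^M)} A A^\#$ of Theorems \ref{thm2.7} and \ref{thm2.9}, which both collapse to $A^\#$. For (d) $\Rightarrow$ (e), I read $(MAA^\#)^* = MAA^\#$ and $(MA^\#A)^* = MA^\#A$ directly off the $M$-core and $M$-dual-core axioms evaluated at $X = A^\#$, so $A^\#$ satisfies all four defining conditions of $A^\dagger_{M,M}$. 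For (e) $\Rightarrow$ (a), Theorem \ref{thm2.12}(c) gives $A^\# = (A^{\core,M})^2 A$; replacing the inner $A^{\core,M}$ by $A^{M,\core}$ and invoking its defining axiom $X^2 A = X$ collapses the right-hand side to $A^{M,\core} = A^\dagger_{M,M}$.

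For the second cluster, the forward directions (a) $\Rightarrow$ (b) and (a) $\Rightarrow$ (c) are routine verifications at $X = A^\#$. For (b) $\Rightarrow$ (a), Proposition \ref{pro2.5} first yields $AXA = A$ and $XAX = X$; left-multiplying $A^2 X = A$ by $A^\#$ and using $A^\# A^2 = A$ produces the key identity $AX = A^\# A = AA^\#$, which promotes the single hypothesis $(MAX)^* = MAX$ into both $(MAA^\#)^* = MAA^\#$ and $(MA^\#A)^* = MA^\#A$, hence $A^\# = A^\dagger_{M,M}$. The implication (c) $\Rightarrow$ (a) is the mirror image: right-multiplying $XA^2 = A$ by $A^\#$ extracts $XA = AA^\# = A^\#A$, after which the hypothesis $(MXA)^* = MXA$ supplies both Hermitian axioms for $A^\#$.

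For the third cluster, each inclusion in (f)--(h) is between subspaces of common dimension $rank(A)$ (since $M$ is invertible), so each is in fact an equality; (g) $\Leftrightarrow$ (h) follows by multiplying through by $M$ or $M^{-1}$, and (f) $\Leftrightarrow$ (h) because $A^T = \overline{A^*}$ and $(A^*M)^T = \overline{MA}$, so (f) is the coordinatewise conjugate of (h) and has the same truth value. For (d) $\Rightarrow$ (h), Theorem \ref{thm2.9}(a) gives $\rg(MA^{M,\core}) = \rg(A^*)$, and since $A^{M,\core} = A^\#$ shares its range with $A$, we conclude $\rg(MA) = \rg(A^*)$.

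The main obstacle is the reverse implication (h) $\Rightarrow$ (a). The plan is to show $(MAA^\#)^* = MAA^\#$ by comparing the two sides on the direct sum $\cn = \rg(A) \oplus \ker(A)$, available since $ind(A)=1$. On $\rg(A)$, $AA^\#$ is the identity, so $MAA^\#$ acts as $v \mapsto Mv$; the adjoint $(AA^\#)^*M$ also sends $v \in \rg(A)$ to $Mv$, because (h) places $Mv$ into $\rg(MA) = \rg(A^*) = \rg((AA^\#)^*)$, on which the idempotent $(AA^\#)^*$ acts as the identity. On $\ker(A)$ both sides vanish, since (h) read through orthogonal complements yields $M\ker(A) = \rg(A)^\perp = \ker((AA^\#)^*)$. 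Agreement on a direct-sum decomposition forces $MAA^\# = (MAA^\#)^*$; since $AA^\# = A^\#A$, this single identity simultaneously supplies both $(3^M)$ and $(4^M)$ for $A^\#$, closing the loop back to (a).
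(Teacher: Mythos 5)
Your proof is correct, and it diverges from the paper's argument in genuinely useful ways. The paper runs every item hub-and-spoke through (a): for (b) and (c) it first proves $X=A^{\#}$ outright via Lemma \ref{lem1.2} and Corollary \ref{cor1.3}, whereas you only extract the identity $AX=A^{\#}A=AA^{\#}$ by hitting $A^2X=A$ with $A^{\#}$ on the left --- shorter, and it never needs to identify $X$ itself. For (d) and (e) the paper computes directly with the axioms (e.g.\ $MAA^{\#}=MA^{\core,M}A^2A^{\#}=MA^{\dagger}_{M,M}A$), while your cycle (a)$\Rightarrow$(d)$\Rightarrow$(e)$\Rightarrow$(a) leans on the closed forms $A^{\core,M}=A^{\#}AA^{(1,3^M)}$ and $A^{\#}=\left(A^{\core,M}\right)^2A$ from Theorems \ref{thm2.7} and \ref{thm2.12}; these are comparable in length. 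The real difference is in (f)--(h): the paper proves each one separately equivalent to (a) by algebraic manipulation (writing $A=ZA^*M$, resp.\ $A=M^{-1}A^*Z$, and massaging until $MAA^{\#}$ is visibly of the form $E^*ME$, hence Hermitian), whereas you first collapse (f), (g), (h) into the single condition $\rg(MA)=\rg(A^*)$ by dimension counting and complex conjugation, and then prove (h)$\Rightarrow$(a) geometrically by verifying $MAA^{\#}=(MAA^{\#})^*$ on the splitting $\cn=\rg(A)\oplus\nl(A)$. Your route buys conceptual clarity --- it makes explicit that the three range conditions are one condition and shows exactly where $ind(A)=1$ enters --- at the cost of needing the auxiliary facts $\rg\left((AA^{\#})^*\right)=\rg(A^*)$ and $\nl\left((AA^{\#})^*\right)=\rg(A)^{\perp}$, which you should state and justify (they follow from $AA^{\#}$ being idempotent with range $\rg(A)$ and null space $\nl(A)$); the paper's route avoids these but repeats essentially the same computation three times. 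Both arguments are complete.
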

\begin{proof}
$(a)\Leftrightarrow(b)$ Let $A$ be weighted-EP with respect to $(M,M)$. Then $(b)$ is trivially hold if we consider $X=A^{\#}=A^{\dagger}_{M,M}$. Conversely, assume that $(MAX)^*=MAX$, $A^2X=A$, $X^2A=X$ for some $X\in\cnn$. Now  $A=A^2X=A^2X^2A=A^2X^3A^2=YA^2$ where $Y=A^2X^3\in\cnn$. By Lemma \ref{lem1.2}, {we obtain $A^{\#}=YAX=Y^2A=AX^2$. Since $MA^{\#}A=MAA^{\#}=MAAX^2=M(A^2X)X=MAX$ and $MAX$ is hermitian, it follows that $A^{\dagger}_{M,M}=A^{\#}$}.\\
$(a)\Leftrightarrow(c)$ in the similar process as $(a)\Leftrightarrow(b)$\\
$(a)\Leftrightarrow (d)$ From the equivalence of $(a)\Leftrightarrow (b)\Leftrightarrow (c)$, it is trivial that $(a)\Rightarrow (d)$. The converse is follows from the fact that $(MA^{\#}A)^*=(MAA^{\#})^*=(MAA^{\core,M})^*=MAA^{\core,M}=MAA^{\#}=MA^{\#}A$.\\
{$(a)\Leftrightarrow (e)$ Let $A^{\core,M}=A^\dagger_{M,M}$. To claim $A^\dagger_{M,M}=A^{\#}$, it is enough to show $MA^{\#}A$ and $MAA^{\#}$ are hermitian. The hermitian property follows from $(MA^{\#}A)^*=(MAA^{\#})^*=(MA^{\core,M}A^2A^{\#})^*=(MA^{\core,M}A)^*=(MA^\dagger_{M,M}A)^*=MA^\dagger_{M,M}A=MA^{\#}A$. Thus $A^\dagger_{M,M}=A^{\#}$}. Hence $A$ is weighted-EP with respect to $(M,M)$. The converse part follows from the equivalence of $(a)\Leftrightarrow (b)\Leftrightarrow (c)$.\\
$(a)\Leftrightarrow (f)$ Let $A^{\#}=A^\dagger_{M,M}$. Then $A=A^2A^{\#}=A^2A^\dagger_{M,M}=AM^{-1}(MAA^\dagger_{M,M})^*=AM^{-1}(A^\dagger_{M,M})^*A^*M$. Thus $\rg(A^T)\subseteq\rg\left((A^*M)^T\right)$. Conversely, let $\rg(A^T)\subseteq\rg\left((A^*M)^T\right)$. Then $A=ZA^*M$ for some $Z\in\cnn$. This yields $A=ZA^*M=Z(AA^{\#}A)^*M=ZA^*MM^{-1}(AA^{\#})^*M=AM^{-1}(AA^{\#})^*M$. Further, $MAA^{\#}=MA^{\#}A=MA^{\#}AM^{-1}(AA^{\#})^*M$ is hermitian. Hence $A^{\#}=A^\dagger_{M,M}$.\\
$(a)\Leftrightarrow (g)$ Let $A^{\#}=A^\dagger_{M,M}$. Then $A=A^{\#}A^2=M^{-1}(MA^{\#}A)A=M^{-1}(MA^\dagger_{M,M}A)^*A=M^{-1}A^*(MA^\dagger_{M,M})^*A$. Thus $\rg(A)\subseteq\rg(M^{-1}A^*)$. Conversely, let $\rg(A)\subseteq\rg(M^{-1}A^*)$. Then $A=M^{-1}A^*Z$ for some $Z\in\cnn$. Which yields $A=M^{-1}A^*Z=M^{-1}(AA^{\#}A)^*Z=M^{-1}(AA^{\#})^*MM^{-1}A^*Z=M^{-1}(AA^{\#})^*MA$. Further, we obtain $MA^{\#}A=MAA^{\#}=MM^{-1}(AA^{\#})^*MAA^{\#}=(AA^{\#})^*MAA^{\#}$ is hermitian. Therefore, $A^{\#}=A^\dagger_{M,M}$. The equivalence between $(a)\Leftrightarrow (h)$ can be proved in the similar way. 
\end{proof}
{In a similar} manner, we can also show the following result. 
\begin{theorem}\label{thm2.17}
Let $A\in\cnn$ and $ind(A)=1$. If $M$ and $N$ are invertible hermitian matrices, then the following are equivalent:
\begin{enumerate}
\item[(a)] The matrix $A$ is weighted-EP {with respect to $(M,N)$};
\item[(b)] there exist $X\in \cnn$ such that $(MAX)^*=MAX$, $(NAX)^*=NAX$, $A^2X=A$, $X^2A=X$;
\item[(c)] there exist $X\in \cnn$ such that $(MXA)^*=MXA$, $(NXA)^*=NXA$, $XA^2=A$, $AX^2=X$;
\item[(d)]  $\rg(A)\subseteq\rg\left(N^{-1}A^*\right)$ and $\rg(A^T)\subseteq\rg\left((A^*M)^T\right)$;
\item[(e)] $\rg\left((AN^{-1})^T\right)\subseteq\rg((A^*)^T)$ and $\rg(A)\subseteq\rg\left(M^{-1}A^*\right)$;
\item[(f)] $\rg\left(N^{-1}A^*\right)\subseteq\rg(A)$ and $\rg((A^*)^T)\subseteq\rg\left((AM^{-1})^T\right)$;
\item[(g)] $\rg((A^*)^T)\subseteq\rg\left((AN^{-1})^T\right)$ and $\rg(A)\subseteq\rg(AM)$.
\end{enumerate}
\end{theorem}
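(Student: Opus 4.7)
The plan is to mirror the structure of Theorem \ref{thm2.16}, adapting each step to two distinct hermitian weights $M$ and $N$ in place of a single one. For $(a)\Rightarrow(b)$, take $X=A^\#=A^\dagger_{M,N}$: conditions $A^2X=A$ and $X^2A=X$ are group-inverse identities, $(MAX)^*=MAX$ is built into the definition of $A^\dagger_{M,N}$, and $(NAX)^*=NAX$ follows from the fourth defining relation $(NXA)^*=NXA$ together with $AX=XA$. The implication $(a)\Rightarrow(c)$ is entirely analogous.

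For $(b)\Rightarrow(a)$, Proposition \ref{pro2.5} first yields $X\in A\{1,2\}$. Then, exactly as in Theorem \ref{thm2.16}, one chains $A=A^2X=A^2X^2A=A^2X^3A^2=YA^2$ with $Y=A^2X^3$, so by Lemma \ref{lem1.2} the group inverse $A^\#$ exists. Because $ind(A)=1$, the identity $A^2X=A=A^2A^\#$ already forces $AX=AA^\#$ at the first power. Consequently $(MAX)^*=MAX$ becomes $(MAA^\#)^*=MAA^\#$, and $(NAX)^*=NAX$ rewrites as $(NA^\#A)^*=NA^\#A$ after using $AA^\#=A^\#A$. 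Combined with $AA^\#A=A$ and $A^\#AA^\#=A^\#$, these are precisely the four weighted Moore-Penrose defining relations satisfied by $A^\#$, so $A^\dagger_{M,N}=A^\#$ and $A$ is weighted-EP. The implication $(c)\Rightarrow(a)$ is obtained symmetrically, using $XA^2=A$ to derive $XA=A^\#A$.

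For the equivalence of $(a)$ with each of $(d)$-$(g)$, the guiding observation is that each of these statements is the conjunction of two range inclusions, one encoding the hermiticity $(MAA^\#)^*=MAA^\#$ and the other encoding $(NA^\#A)^*=NA^\#A$; together these two hermiticity conditions are precisely what makes $A^\#=A^\dagger_{M,N}$. For $(a)\Rightarrow(d)$, one argues as in Theorem \ref{thm2.16}(f)-(g): hermiticity of $NA^\#A$ gives $A^\#A=N^{-1}A^*(A^\#)^*N$, whence $A=A^\#A\cdot A=N^{-1}A^*(A^\#)^*NA\in\rg(N^{-1}A^*)$; analogously, hermiticity of $MAA^\#$ yields $\rg(A^T)\subseteq\rg((A^*M)^T)$. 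The cross-equivalences $(d)\Leftrightarrow(e)\Leftrightarrow(f)\Leftrightarrow(g)$ rest on two standard facts: first, since $rank(N^{-1}A^*)=rank(AM)=rank(A)$ and likewise for the other expressions, every inclusion appearing is automatically an equality whose direction may be reversed; second, conjugate transposition converts range conditions on $A$ into corresponding row-space conditions on $A^*$, letting expressions like $(AN^{-1})^T$ and $N^{-1}A^*$ be interchanged. The converses $(d)\Rightarrow(a)$ and their analogues follow the converse half of Theorem \ref{thm2.16}(f)-(g): given $A=N^{-1}A^*Z$ one verifies directly that $NA^\#A$ is hermitian, and similarly for the $M$-side.

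The main obstacle is bookkeeping rather than conceptual content: correctly sorting which subcondition in $(d)$-$(g)$ encodes the $M$-hermiticity and which encodes the $N$-hermiticity, and tracking how transposes and conjugates interact with the two different weights. Once the one-weight arguments of Theorem \ref{thm2.16}(f)-(h) are applied independently to the $M$-side and the $N$-side, the chain of equivalences closes with only routine manipulation.
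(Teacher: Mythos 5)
The paper gives no proof of this theorem at all---it is introduced with only the remark that it can be shown ``in similar manner'' to Theorem \ref{thm2.16}---and your proposal is exactly that argument carried out with the two hermiticity conditions $(MAA^{\#})^*=MAA^{\#}$ and $(NA^{\#}A)^*=NA^{\#}A$ tracked separately, so in approach you coincide with what the authors intend. The individual steps are sound: Proposition \ref{pro2.5} gives $X\in A\{1,2\}$; left-multiplying $A^2X=A=A^2A^{\#}$ by $A^{\#}$ does force $AX=AA^{\#}$, which transfers the two hermiticity conditions from $X$ to $A^{\#}$; and the inclusions in $(d)$, $(e)$, $(f)$ do split cleanly into an $M$-half and an $N$-half obtained by running parts $(f)$--$(h)$ of Theorem \ref{thm2.16} once for each weight, with the rank/transposition observations closing the cycle.

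One caveat: your ``guiding observation'' that each of $(d)$--$(g)$ is the conjunction of one inclusion encoding the $M$-hermiticity and one encoding the $N$-hermiticity fails for $(g)$ as literally printed. Since $M$ is invertible, $\rg(AM)=\rg(A)$ holds for every $A$, so the second inclusion in $(g)$ is vacuous and $(g)$ as stated captures only the $N$-condition, which is strictly weaker than weighted-EP with respect to $(M,N)$. Comparing with Theorem \ref{thm2.16}$(h)$, the intended condition is evidently $\rg(A^*)\subseteq\rg(MA)$; with that correction your argument goes through. You should flag the typo (or prove the corrected statement) rather than assert that the pattern holds verbatim for all four items.
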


\begin{theorem}\label{thm2.18}
Let $M$, $N$ be hermitian and positive definite matrices. 
If $A\in\cnn$ be a core matrix, then the following five conditions are equivalent:
\begin{enumerate}
\item[(a)] $A$ is weighted-EP with respect to $(M,N)$;
\item[(b)] $A^nA^{\core,M}=A^{N,\core}A^n$ for $n\in\mathbb{N}$;
\item[(c)] $(A^{\#})^nA^{\core,M}=A^{N,\core}(A^{\#})^n$ for $n\in\mathbb{N}$;
\item[(d)] $\left(A^{\core,M}\right)^{\core,M}=\left(A^{N,\core}\right)^{N,\core}$;
\item[(e)] $A^{\core,M}=A^{N,\core}$.
\end{enumerate}
\end{theorem}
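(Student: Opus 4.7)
The plan is to place (e) at the center of a web of equivalences and prove $(a)\!\Leftrightarrow\!(e)$, $(b)\!\Leftrightarrow\!(e)$, $(c)\!\Leftrightarrow\!(e)$, and $(d)\!\Leftrightarrow\!(e)$ separately. Before attacking the individual implications I would record four algebraic identities that fall directly out of the defining equations together with Propositions \ref{pro2.2} and \ref{pro2.5}:
\begin{equation*}
A^{\core,M}A=AA^{\#},\quad AA^{N,\core}=AA^{\#},\quad AA^{\#}\,A^{\core,M}=A^{\core,M},\quad A^{N,\core}\,AA^{\#}=A^{N,\core}.
\end{equation*}
The first two follow from right-(resp.\ left-)multiplying $A^{\core,M}A^2=A$ and $A^2A^{N,\core}=A$ by $A^{\#}$ and invoking $A^2A^{\#}=A=A^{\#}A^2$; the last two exploit $A^{\core,M}=A(A^{\core,M})^2$ and $A^{N,\core}=(A^{N,\core})^2A$. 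These four identities collapse almost every subsequent manipulation.

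For $(a)\Rightarrow(e)$: if $A^{\dagger}_{M,N}=A^{\#}$, then both $MAA^{\#}$ and $NA^{\#}A$ are Hermitian, and $A^{\#}$ satisfies the three defining equations of both $A^{\core,M}$ (Definition \ref{mcore}) and $A^{N,\core}$ (Definition \ref{ncore}); uniqueness (Theorems \ref{unimcore} and \ref{unincore}) then forces $A^{\core,M}=A^{N,\core}=A^{\#}$. For $(e)\Rightarrow(a)$: setting $X=A^{\core,M}=A^{N,\core}$, the two definitions together give $XA^2=A$ and $A^2X=A$; combined with $X\in A\{1,2\}$ and the preliminary identities, $AX=A^{\core,M}A=AA^{\#}=AA^{N,\core}=XA$, so $X=A^{\#}$, and the two weighted Hermiticity conditions of $X$ certify $A^{\#}=A^{\dagger}_{M,N}$. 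Once this equivalence is in hand, the forward implications $(e)\Rightarrow(b),(c),(d)$ are immediate, since $A^{\#}$ commutes with every power of $A$ and $A^{\#}$, and Theorems \ref{thm2.12}(a) and \ref{thm2.13}(a) reduce $(d)$ to the trivial identity $A^2A^{\#}=A^{\#}A^2$.

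For the reverse implications I would specialize to $n=1$. In $(b)$, right-multiplying $AA^{\core,M}=A^{N,\core}A$ by $A^{\core,M}$ and using $A(A^{\core,M})^2=A^{\core,M}$ gives $A^{\core,M}=A^{N,\core}AA^{\core,M}$, while left-multiplying by $A^{N,\core}$ and using $(A^{N,\core})^2A=A^{N,\core}$ gives $A^{N,\core}=A^{N,\core}AA^{\core,M}$; the two are therefore equal. In $(c)$, multiplying $A^{\#}A^{\core,M}=A^{N,\core}A^{\#}$ by $A$ on the right (resp.\ on the left) and invoking the preliminary identities yields $A^{N,\core}=A^{\#}$ (resp.\ $A^{\core,M}=A^{\#}$). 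The most delicate step is $(d)\Rightarrow(e)$: rewriting $(d)$ as $A^2X=YA^2$ with $X=A^{\core,M}$ and $Y=A^{N,\core}$, I would right-multiply by $X^2$ and telescope via $AX^2=X$, collapsing the LHS to $X$ and the RHS to $YAX$; symmetrically, left-multiplying by $Y^2$ and telescoping via $Y^2A=Y$ collapses the LHS to $YAX$ and the RHS to $Y$, so $X=YAX=Y$. Spotting these two particular cancellations is the one place that requires real thought; everything else is bookkeeping driven by the four identities above.
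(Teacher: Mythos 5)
The paper states Theorem \ref{thm2.18} without any proof (it appears immediately after the unproved Theorem \ref{thm2.17} and is followed directly by Section 4), so there is nothing to compare your argument against; judged on its own, your proof is correct and complete. The hub-and-spoke structure around (e), together with the four preliminary identities $A^{\core,M}A=AA^{\#}$, $AA^{N,\core}=A^{\#}A$, $AA^{\#}A^{\core,M}=A^{\core,M}$ and $A^{N,\core}AA^{\#}=A^{N,\core}$, is exactly the right machinery: it makes $(a)\Leftrightarrow(e)$ a uniqueness argument (via Theorems \ref{unimcore} and \ref{unincore}), reduces $(b)\Rightarrow(e)$ and $(c)\Rightarrow(e)$ to the $n=1$ case by one-sided multiplication, and your telescoping trick for $(d)\Rightarrow(e)$ --- rewriting $(d)$ as $A^2X=YA^2$ via Theorems \ref{thm2.12}(a) and \ref{thm2.13}(a) and cancelling with $X^2$ on the right and $Y^2$ on the left --- is clean and checks out. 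One small slip worth fixing: in $(e)\Rightarrow(a)$ the displayed chain ``$AX=A^{\core,M}A=AA^{\#}=AA^{N,\core}=XA$'' has the factors in the wrong order; what your identities actually give is $XA=A^{\core,M}A=AA^{\#}=A^{\#}A=AA^{N,\core}=AX$, which is what you need. You might also add one sentence observing that since $M$ and $N$ are positive definite and $A$ is a core matrix, $A^{\core,M}$ and $A^{N,\core}$ exist (Theorem \ref{thm2.7}(d) and Theorem \ref{thm2.9}(d)), so the objects appearing in $(b)$--$(e)$ are well defined before the equivalences are argued.
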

{
\begin{proof}
(a)$\Rightarrow$ (b)--(e)  It  follows from $A^\dagger_{M,N}=A^{\#}=A^{\core,M}=A^{N,\core}$, Theorem \ref{thm2.12} (a), and \ref{thm2.13} (a).\\
(b)$\Rightarrow$ (a) It is enough to show that $MAA^{\#}$ and $NA^{\#}A$ are hermitian. Let $A^nA^{\core,M}=A^{N,\core}A^n$. Now  
$AA^{\core,M}=(A^{\#})^nA^{n+1}A^{\core,M}=(A^{\#})^nAA^{N,\core}A^n=(A^{\#})^nA^n=A^{\#}A=AA^{\#}$. Similarly, we can show that $A^{N,\core}A=A^{\#}A$. The hermitian property of $MAA^{\#}$ and $NA^{\#}A$ holds due to the fact that both $MAA^{\core,M}$ and $NA^{N,\core}A$ are hermitian.\\
(c)$\Rightarrow$ (a)  In similar way (b)$\Rightarrow$ (a).\\
(d)$\Rightarrow$ (a) Let $\left(A^{\core,M}\right)^{\core,M}=\left(A^{N,\core}\right)^{N,\core}$. Then by Theorem \ref{thm2.12} (a), and \ref{thm2.13} (a), we obtain $A^2A^{\core,M}=A^{N,\core}A^2$. Thus   $A^nA^{\core,M}=A^{N,\core}A^n$. Hence the result follows from (b)$\Rightarrow$ (a).\\
(e)$\Rightarrow$ (a) Let $A^{\core,M}=A^{N,\core}$. Then 
\begin{center}
 $AA^{\core,M}=AA^{N,\core}=A\left(A^{N,\core}\right)^2A=A\left(A^{N,\core}\right)^2A^2A^{\#}=AA^{N,\core}AA^{\#}=AA^{\#}$.    
\end{center}
Similarly we can show $A^{N,\core}A=A^{\#}A$. This completes the proof.
\end{proof}

}

\section{Reverse order law for weighted core inverse}
It is well-known that the reverse-order law for matrices are a topic of considerable  research in the theory of generalized inverses. In this section we establish a few necessary and sufficient conditions for the reverse order law to hold in the framework of weighted core inverse. In this context, a sufficient condition of reverse-order law of M-weighted core inverse is discussed in the following theorem.  
\begin{theorem}\label{thm4.1}
Let {$A$ and} $B \in\cnn$. Consider $M\in\cnn$ to be an invertible hermitian {matrix}. If
\begin{equation*}
    A^{\core,M}B=B^{\core,M}A~~~~\textnormal{and}~~~ AA^{\core,M}=BA^{\core,M},
\end{equation*}
 then
\begin{equation*}
\left(AB\right)^{\core,M}=B^{\core,M}A^{\core,M}=\left(A^{\core,M}\right)^2=(A^2)^{\core,M}.
\end{equation*}
\end{theorem}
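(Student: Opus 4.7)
The last equality $(A^{\core,M})^{2}=(A^{2})^{\core,M}$ is immediate from Theorem \ref{thm2.12}(d), so the plan reduces to establishing the two remaining equalities $B^{\core,M}A^{\core,M}=(A^{\core,M})^{2}$ and $(AB)^{\core,M}=(A^{\core,M})^{2}$. Abbreviate $X=A^{\core,M}$ and $Y=B^{\core,M}$. The strategy has three stages: first, exploit $AX=BX$ to obtain the cancellation identity $BA=A^{2}$; second, combine this with $XB=YA$ and the standard core-inverse identities $XA^{2}=A$, $AX^{2}=X$, $XAX=X$ to prove $YX=X^{2}$; third, feed the resulting relations into Proposition \ref{prop11} to identify $X^{2}$ as $(AB)^{\core,M}$.

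Stage one is a one-liner: post-multiplying $(A-B)X=0$ by $A$ gives $(A-B)XA^{2}=0$, and then $XA^{2}=A$ yields $BA=A^{2}$. For stage two, I would substitute $X=AX^{2}$ into $YX$ and push the hypothesis through,
\begin{equation*}
YX=Y(AX^{2})=(YA)X^{2}=(XB)X^{2}=X(BX)X=X(AX)X=X(AX^{2})=X^{2},
\end{equation*}
which already delivers $B^{\core,M}A^{\core,M}=(A^{\core,M})^{2}$.

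For stage three, I verify that $Z:=X^{2}$ satisfies the three defining relations of Proposition \ref{prop11} relative to $AB$, namely $(2)\ Z(AB)Z=Z$, $(3^{M})\ (MABZ)^{*}=MABZ$, and $(6)\ Z(AB)^{2}=AB$. Each collapses in one or two substitutions using $BX=AX$ and $BA=A^{2}$. For condition $(6)$, expand $(AB)^{2}=ABAB=A(BA)B=A^{3}B$ and then compute $X^{2}A^{3}=(X^{2}A^{2})A=(XA)A=XA^{2}=A$, so $Z(AB)^{2}=X^{2}A^{3}B=AB$. For $(3^{M})$, $MABZ=MA(BX)X=MA(AX)X=MA\cdot AX^{2}=MAX$, which is Hermitian by the definition of $X=A^{\core,M}$. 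For $(2)$, the same reduction gives $(AB)Z=AX$, hence $Z(AB)Z=X^{2}\cdot AX=X(XAX)=X^{2}$. Proposition \ref{prop11} then forces $(AB)^{\core,M}=X^{2}$, completing the chain.

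The main obstacle is pure bookkeeping rather than depth: each step is a one-line substitution, but one must decide carefully at each stage which of $AX$, $XA$, $BX$, or $YA$ to rewrite. The enabling observations are that $X=AX^{2}$ lets us insert an $A$ next to $X$ in order to activate $XB=YA$, while $AX^{2}=X$, $XAX=X$, and $XA^{2}=A$ let us collapse the resulting strings back to $X^{2}$ or $A$.
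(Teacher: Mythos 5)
Your proof is correct and follows essentially the same route as the paper: the chain $B^{\core,M}A^{\core,M}=(B^{\core,M}A)(A^{\core,M})^{2}=\cdots=(A^{\core,M})^{2}$ is the paper's opening computation verbatim, and your verification that $(A^{\core,M})^{2}$ is the $M$-weighted core inverse of $AB$ matches the paper's calculations, except that the paper checks the defining triple $(3^{M})$, $(6)$, $(7)$ of Definition~\ref{mcore} directly while you check $(2)$, $(3^{M})$, $(6)$ and close via Proposition~\ref{prop11}. Your explicit extraction of $BA=A^{2}$ at the outset is a cleaner presentation of a cancellation the paper performs inline, but it is the same idea.
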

\begin{proof}
Let $A^{\core,M}B=B^{\core,M}A$. Then
\begin{center}
 $B^{\core,M}A^{\core,M}=(B^{\core,M}A)(A^{\core,M})^2
= A^{\core,M}BA^{\core,M}A^{\core,M}
= A^{\core,M}AA^{\core,M}A^{\core,M}
= (A^{\core,M})^2$.
\end{center}
From Theorem \ref{thm2.12} $(d)$, we get $B^{\core,M}A^{\core,M}= (A^{\core,M})^2
= (A^2)^{\core,M}$. Next we claim that, $B^{\core,M}A^{\core,M}$ is the $M$-weighted core inverse of $AB$. To show this, let $X=B^{\core,M}A^{\core,M}$. Then
\begin{eqnarray*}
  (MABX)^*&=&(MABB^{\core,M}A^{\core,M})^*=\left(MA(BA^{\core,M})A^{\core,M}\right)^*=(MAAA^{\core,M}A^{\core,M})^*\\
  &=&(MAA^{\core,M})^*=MAA^{\core,M}=MABX,  
\end{eqnarray*}
\begin{eqnarray*}
X(AB)^2&=&B^{\core,M}A^{\core,M}(AB)^2=(A^{\core,M})^2A(BA)B={(A^{\core,M})^2ABA^{\core,M}A^2B}\\
&=&(A^{\core,M})^2A^2A^{\core,M}A^2B={(A^{\core,M})^2A^3B}=AB,\mbox{ and }
\end{eqnarray*}
\begin{eqnarray*}
ABX^2&=&AB(B^{\core,M}A^{\core,M})^2=AB(A^{\core,M})^4=A^2(A^{\core,M})^4=(A^{\core,M})^2=B^{\core,M}A^{\core,M}.
\end{eqnarray*}
Therefore, $(AB)^{\core,M}=B^{\core,M}A^{\core,M}$.
\end{proof}

Similarly, we can show the following result for $N$-weighted dual core inverse. 
\begin{theorem}\label{thm4.2}
Let $N\in\cnn$ be an invertible hermitian matrix and $ A,B \in\cnn$. If
\begin{equation*}
AB^{N,\core}=BA^{N,\core} ~~~~\textnormal{and}~~~~ B^{N,\core}B=B^{N,\core}A, 
\end{equation*}
then
\begin{equation*}
(AB)^{N,\core}=B^{N,\core}A^{N,\core}=(B^{N,\core})^2=(B^2)^{N,\core}.
\end{equation*}
\end{theorem}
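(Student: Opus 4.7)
The plan is to mirror the proof of Theorem~\ref{thm4.1} with the roles of left and right reversed, breaking the argument into three pieces.

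\textbf{Step 1: collapse $B^{N,\core}A^{N,\core}$ to $(B^{N,\core})^2$.} Axiom~$(9)$ of Definition~\ref{ncore} applied to $B^{N,\core}$ gives $B^{N,\core}=(B^{N,\core})^2B$, so I would start with $B^{N,\core}A^{N,\core}=(B^{N,\core})^2(BA^{N,\core})$, invoke hypothesis $BA^{N,\core}=AB^{N,\core}$ to reach $(B^{N,\core})^2 A\,B^{N,\core}=B^{N,\core}(B^{N,\core}A)B^{N,\core}$, invoke hypothesis $B^{N,\core}A=B^{N,\core}B$ to rewrite this as $B^{N,\core}(B^{N,\core}B)B^{N,\core}=\bigl((B^{N,\core})^2B\bigr)B^{N,\core}$, and apply $(9)$ once more to land on $(B^{N,\core})^2$. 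Combining this with Theorem~\ref{thm2.13}(d) at $n=2$ yields the two equalities $B^{N,\core}A^{N,\core}=(B^{N,\core})^2=(B^2)^{N,\core}$ simultaneously.

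\textbf{Step 2: verify $X:=B^{N,\core}A^{N,\core}$ satisfies the three defining axioms of $(AB)^{N,\core}$.} The single auxiliary identity powering every check is $BA=B^2$, obtained from hypothesis~(ii) by left-multiplication by $B^2$ and application of the axiomatic $B^2B^{N,\core}=B$. With $X=(B^{N,\core})^2$ from Step~1, I would then verify Definition~\ref{ncore} with $A$ replaced by $AB$. For $(NX(AB))^{*}=NX(AB)$, reduce $X(AB)$ via two applications of $B^{N,\core}A=B^{N,\core}B$ to $B^{N,\core}B$, which becomes Hermitian upon left-multiplication by $N$ by the $(4^N)$-axiom for $B^{N,\core}$. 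For $(AB)^2X=AB$, rewrite $(AB)^2=A(BA)B=AB^3$ using $BA=B^2$ and telescope via two applications of $B^2B^{N,\core}=B$ to obtain $AB$. For $X^2(AB)=X$, reduce $(B^{N,\core})^4 AB$ first to $(B^{N,\core})^4 B^2$ via $B^{N,\core}A=B^{N,\core}B$, and then to $(B^{N,\core})^2$ by two applications of $(B^{N,\core})^2 B=B^{N,\core}$.

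\textbf{Anticipated difficulty.} The manipulations are elementary but the sequencing is delicate: each hypothesis can only fire when its specific adjacent factor is present, so one must shuffle $B^{N,\core}$ factors into the correct position before each substitution. Extracting the scalar-looking consequence $BA=B^2$ of~(ii) at the outset is what makes the subsequent checks routine, since thereafter every occurrence of $A$ sitting to the right of a $B$ can be absorbed, and the entire calculation proceeds inside the subalgebra generated by $B$ and $B^{N,\core}$ using only the defining axioms of the $N$-weighted dual core inverse.
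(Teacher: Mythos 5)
Your proof is correct and matches the paper's intent exactly: the paper omits the proof of Theorem~\ref{thm4.2}, stating only that it follows ``similarly'' to Theorem~\ref{thm4.1}, and your argument is precisely that dualization --- collapse $B^{N,\core}A^{N,\core}$ to $(B^{N,\core})^2$ using the two hypotheses together with axiom $(9)$, identify it with $(B^2)^{N,\core}$ via Theorem~\ref{thm2.13}(d), and then verify the three defining equations of Definition~\ref{ncore} for $AB$. Your preliminary extraction of the identity $BA=B^2$ is a minor streamlining relative to how the paper handles the analogous verifications in Theorem~\ref{thm4.1}, but it is not a different method.
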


\begin{theorem}\label{unit} 
Let $M\in\cnn$ be an invertible hermitan matrix and $ A,B \in\cnn$ with $ind(A)=1=ind(B)=ind( {A}  {B})$.
\begin{enumerate}
    \item[(a)] If the matrix $ {B}$ is unitary and $\rg( {B}^*  {A}^{\core, M})\subseteq\rg( {A}^{\core, M})$, then 
    \begin{equation*}
        ( {A}  {B})^{\core, M}= {B}^*  {A}^{\core, M}.
    \end{equation*}
    \item[(b)] If the matrix $ {A}$ is unitary and $\rg( {A})\subseteq\rg(B)$, then 
    \begin{equation*}
    ( {A}  {B})^{\core, M}= {B}^{\core, M}  {A}^{*}.    
    \end{equation*}
    
\end{enumerate}
\end{theorem}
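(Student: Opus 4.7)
My plan is to verify, in both parts, the three defining conditions $(1)$, $(3^M)$, and $(7)$ from Proposition \ref{prop12} for the matrix $AB$ with the proposed formula as the candidate $M$-weighted core inverse. Two ingredients drive every simplification: the unitarity identities $BB^* = I$ (in part (a)) and $A^*A = AA^* = I$ (in part (b)), together with the projector interpretation of $AA^{\core,M}$ and $BB^{\core,M}$ supplied by Theorem \ref{thm2.7}, namely that these are the (oblique) idempotents with ranges $\rg(A)$ and $\rg(B)$ respectively.

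For part (a), I set $X = B^*A^{\core,M}$. Condition $(1)$ follows by collapsing the inner $BB^*$ to the identity and invoking $A^{\core,M} \in A\{1,2\}$ from Proposition \ref{pro2.2}, yielding $(AB)X(AB) = A(BB^*)A^{\core,M}(AB) = A A^{\core,M}AB = AB$. Condition $(3^M)$ is immediate after the same collapse: $M(AB)X = MAA^{\core,M}$, whose Hermiticity is exactly $(3^M)$ for $A^{\core,M}$. Condition $(7)$ is the only place the range hypothesis enters: $(AB)X^2 = AA^{\core,M}\cdot B^*A^{\core,M}$, and I invoke from Theorem \ref{thm2.7} that $AA^{\core,M}$ is idempotent with range $\rg(A) = \rg(A^{\core,M})$, so the inclusion $\rg(B^*A^{\core,M}) \subseteq \rg(A^{\core,M})$ forces $AA^{\core,M}\cdot B^*A^{\core,M} = B^*A^{\core,M} = X$, completing this half.

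For part (b), I set $Y = B^{\core,M}A^*$. Condition $(1)$ dualizes the previous calculation: $(AB)Y(AB) = A(BB^{\core,M})(A^*A)B = A(BB^{\core,M})B = AB$, using $A^*A = I$ and that $BB^{\core,M}$, being the projector onto $\rg(B)$, fixes every column of $B$. The main obstacle is condition $(3^M)$: the expression $M(AB)Y = MA(BB^{\core,M})A^*$ does not simplify through $(MBB^{\core,M})^* = MBB^{\core,M}$, because $A$ need not commute with $M$. This is resolved by the observation that $A$ unitary gives $\rg(A) = \mathbb{C}^n$, so the hypothesis $\rg(A) \subseteq \rg(B)$ forces $B$ to be surjective and hence invertible; consequently $B^{\core,M} = B^{-1}$ and $BB^{\core,M} = I$, making $M(AB)Y = MAA^* = M$ trivially Hermitian, and condition $(7)$ reads $(AB)Y^2 = (AA^*)B^{-1}A^* = B^{-1}A^* = Y$. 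The principal difficulty is thus this asymmetry between the two parts: part (b) resists a purely range-theoretic verification paralleling part (a) and instead reduces, via the range hypothesis, to the classical identity $(AB)^{-1} = B^{-1}A^{-1}$ in disguise.
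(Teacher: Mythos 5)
Your proof is correct. In part (a) you follow essentially the paper's route --- verify conditions $(1)$, $(3^M)$, $(7)$ of Proposition \ref{prop12} for $X=B^*A^{\core,M}$ after collapsing $BB^*=I$ --- differing only in that the paper encodes the range hypothesis as $B^*A^{\core,M}=A^{\core,M}U$ and uses $A(A^{\core,M})^2=A^{\core,M}$, while you let the idempotent $AA^{\core,M}$ act as the identity on $\rg(A)=\rg(A^{\core,M})$; these are the same step in different notation. Part (b) is where you genuinely depart. The paper verifies conditions $(2)$, $(3^M)$, $(6)$ of Proposition \ref{prop11} head-on, and its long chain for $(3^M)$ passes through the equality $(AA^*MABB^{\core,M}A^*)^* = ABB^{\core,M}A^*MAA^*$, which tacitly uses $(BB^{\core,M})^*=BB^{\core,M}$ --- a symmetry the hypotheses do not grant, since only $MBB^{\core,M}$ is assumed Hermitian. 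Your observation that $A$ unitary gives $\rg(A)=\cn$, so $\rg(A)\subseteq\rg(B)$ forces $B$ (hence $AB$) to be invertible, $B^{\core,M}=B^{-1}$, and the claim collapses to $(AB)^{-1}=B^{-1}A^{-1}$, is shorter, airtight, and incidentally explains why the paper's questionable step is harmless here (one in fact has $BB^{\core,M}=I$). The trade-off: the paper's computational template is what one would need if ``unitary plus $\rg(A)\subseteq\rg(B)$'' were ever relaxed to hypotheses that do not trivialize $B$; your argument proves the theorem exactly as stated with essentially no computation and exposes that part (b) admits no genuinely non-invertible cases.
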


\begin{proof}
$(a)$ Let $\rg( {B}^*  {A}^{\core, M})\subseteq\rg( {A}^{\core, M})$. This implies $ {B}^*  {A}^{\core, M}= {A}^{\core, M}  {U}$ for some $ {U}\in\cnn$. Now
\begin{center}
   $ {A}  {B}  {B}^*  {A}^{\core, M}  {A}  {B}= {A}  {A}^{\core, M}  {A}  {B}= {A}  {B}$,  \\
   $( M{A}  {B}  {B}^*  {A}^{\core, M})^*=( M{A}  {A}^{\core, M})^*= M{A}  {A}^{\core, M}= M{A}  {B}  {B}^*  {A}$, and 
\end{center}
\begin{eqnarray*}
 {A}  {B} ( {B}^*  {A}^{\core, M})^2= {A}  {A}^{\core, M}  {B}^*  {A}^{\core, M}= {A}  {A}^{\core, M}  {A}^{\core, M}  {U}= {A}^{\core, M}  {U}= {B}^*  {A}^{\core, M}.
\end{eqnarray*}
Thus by Proposition \ref{prop12}, $( {A}  {B})^{\core, M}= {B}^*  {A}^{\core, M}$.\\
$(b)$ Let $\rg( {A})\subseteq\rg( {B})$. Then $ {A}= {B}  {U}$ for some $ {U}\in\cnn$, 
\begin{center}
    $ {B}^{\core, M}  {A}^{*} (  {A}  {B})^2= {B}^{\core, M}  {B}  {A}  {B}= {B}  {U}  {B}= {A}  {B}$, and {$A=BU=B{B}^{\core, M}  {B}U=B{B}^{\core, M}A$.}
\end{center} 
Further, 

{
\begin{eqnarray*}
(MAB{B}^{\core, M}A^*)^*&=& (MAM^{-1}MBB^{\core, M}A^{*})^*
=AMBB^{\core, M}M^{-1}A^*M\\
&=&AMBB^{\core, M}AA^*M^{-1}A^*M
=AMBB^{\core, M}BUA^*M^{-1}A^*M\\
&=&AMBUA^*M^{-1}A^*M
=AMAA^*M^{-1}A^*M.\\
&=&AMM^{-1}A^*M
=M\\
&=&MA(A)A^*A^*
=MABB^{\core, M}AA^*A^*
=MABB^{\core, M}A^*
\end{eqnarray*}
}

Hence by Proposition \ref{prop11}, $(AB)^{\core, M}= B^{\core}A^{*}$.
\end{proof}

\section{Conclusion}
In the literature, the weighted core inverses for a matrix are defined in a particular manner (see  \cite{fer18,gao2018,MaH19}). Our research introduces new representations for the $M$-weighted core and $N$-weighted dual-core inverses for matrices. Subsequently, we investigate the properties of weighted core inverse of matrices and discuss the existence of generalized weighted Moore-Penrose inverse along with the relationships with other generalized inverses. The beauty of the new representation of weighted core inverses is that the algorithm developed for one particular choice of generalized inverse can be easily extended to the other types of generalized inverses. This gives us the flexibility to choose generalized inverses depending on applications. In addition to that, we have also discussed the conditions under which the reverse order laws hold for these inverses.

\medskip

\noindent{\bf{Acknowledgments.}}\\
The first and the third authors are grateful to the Mohapatra Family Foundation and the College of Graduate Studies, University of Central Florida, Orlando, for their financial support for this research.

\bibliographystyle{abbrv}
\bibliography{reference}

\begin{thebibliography}{10}

\bibitem{BakTr10}
O.~M. Baksalary and G.~Trenkler.
\newblock Core inverse of matrices.
\newblock {\em Linear Multilinear Algebra}, 58(5-6):681--697, 2010.

\bibitem{BakTr14}
O.~M. Baksalary and G.~Trenkler.
\newblock On a generalized core inverse.
\newblock {\em Appl. Math. Comput.}, 236:450--457, 2014.

\bibitem{BapatRao90}
R.~B. Bapat, K.~P. S.~B. Rao, and K.~M. Prasad.
\newblock Generalized inverses over integral domains.
\newblock {\em Linear Algebra Appl.}, 140:181--196, 1990.

\bibitem{baskett1969}
T.~S. Baskett and I.~J. Katz.
\newblock Theorems on products of {$EP_{r}$} matrices.
\newblock {\em Linear Algebra and Appl.}, 2:87--103, 1969.

\bibitem{BenIsrael03}
A.~Ben-Israel and T.~N.~E. Greville.
\newblock {\em Generalized inverses: theory and applications}.
\newblock Wiley-Interscience [John Wiley \& Sons], New York-London-Sydney, 1974.
\newblock Pure and Applied Mathematics.

\bibitem{Cline68}
R.~E. Cline.
\newblock Inverses of rank invariant powers of a matrix.
\newblock {\em SIAM J. Numer. Anal.}, 5:182--197, 1968.

\bibitem{deng2011}
C.~Y. Deng.
\newblock Reverse order law for the group inverses.
\newblock {\em J. Math. Anal. Appl.}, 382(2):663--671, 2011.

\bibitem{Drazin58}
M.~P. Drazin.
\newblock Pseudo-inverses in associative rings and semigroups.
\newblock {\em Amer. Math. Monthly}, 65:506--514, 1958.

\bibitem{fer18}
D.~E. Ferreyra, F.~E. Levis, and N.~Thome.
\newblock Revisiting the core {EP} inverse and its extension to rectangular matrices.
\newblock {\em Quaest. Math.}, 41(2):265--281, 2018.

\bibitem{GaoChen18}
Y.~Gao and J.~Chen.
\newblock Pseudo core inverses in rings with involution.
\newblock {\em Comm. Algebra}, 46(1):38--50, 2018.

\bibitem{gao2018}
Y.~Gao, J.~Chen, and P.~Patr{\'\i}cio.
\newblock Representations and properties of the {W}-weighted core-{EP} inverse.
\newblock {\em Linear and Multilinear Algebra}, pages 1--15, 2018.

\bibitem{Grev1966}
T.~N.~E. Greville.
\newblock Note on the generalized inverse of a matrix product.
\newblock {\em SIAM Rev.}, 8:518--521; erratum, ibid. 9 (1966), 249, 1966.

\bibitem{khatri1968generalized}
C.~Khatri and C.~Rao.
\newblock Generalized inverse of matrices and its applications, 1968.

\bibitem{Katri68}
C.~G. Khatri.
\newblock Some results for the singular normal multivariate regression models.
\newblock {\em Sankhy\={a} Ser. A}, 30:267--280, 1968.

\bibitem{kurata}
H.~Kurata.
\newblock Some theorems on the core inverse of matrices and the core partial ordering.
\newblock {\em Appl. Math. Comput.}, 316:43--51, 2018.

\bibitem{MaH19}
H.~Ma.
\newblock A characterization and perturbation bounds for the weighted core-{EP} inverse.
\newblock {\em Quaestiones Mathematicae}, pages 1--11, 2019.

\bibitem{HaiTing19}
H.~Ma and T.~Li.
\newblock Characterizations and representations of the core inverse and its applications.
\newblock {\em Linear and Multilinear Algebra}, pages 1--11, 2019.

\bibitem{ManMo14}
K.~Manjunatha~Prasad and K.~S. Mohana.
\newblock Core-{EP} inverse.
\newblock {\em Linear Multilinear Algebra}, 62(6):792--802, 2014.

\bibitem{mat}
G.~Matsaglia and G.~P.~H. Styan.
\newblock Equalities and inequalities for ranks of matrice.
\newblock {\em Linear Multilinear Algebra}, 2:3:269--292, 1974.

\bibitem{mosic2019}
D.~Mosi{\'c}.
\newblock Weighted core--{EP} inverse of an operator between {H}ilbert spaces.
\newblock {\em Linear and Multilinear Algebra}, 67(2):278--298, 2019.

\bibitem{DijanaChu18}
D.~Mosi\'{c}, C.~Deng, and H.~Ma.
\newblock On a weighted core inverse in a ring with involution.
\newblock {\em Comm. Algebra}, 46(6):2332--2345, 2018.

\bibitem{MosiDj11}
D.~Mosi\'{c} and D.~S. Djordjevi\'{c}.
\newblock Reverse order law for the {M}oore-{P}enrose inverse in {$C^*$}-algebras.
\newblock {\em Electron. J. Linear Algebra}, 22:92--111, 2011.

\bibitem{MosiDij12}
D.~Mosi\'{c} and D.~S. Djordjevi\'{c}.
\newblock Reverse order law for the group inverse in rings.
\newblock {\em Appl. Math. Comput.}, 219(5):2526--2534, 2012.

\bibitem{PaniBeheMi20}
K.~Panigrahy, R.~Behera, and D.~Mishra.
\newblock Reverse-order law for the {M}oore--{P}enrose inverses of tensors.
\newblock {\em Linear Multilinear Algebra}, 68(2):246--264, 2020.

\bibitem{PrasadBapat92}
K.~M. Prasad and R.~B. Bapat.
\newblock The generalized {M}oore-{P}enrose inverse.
\newblock {\em Linear Algebra Appl.}, 165:59--69, 1992.

\bibitem{PrasadBapa91}
K.~M. Prasad, K.~P. S.~B. Rao, and R.~B. Bapat.
\newblock Generalized inverses over integral domains. {II}. {G}roup inverses and {D}razin inverses.
\newblock {\em Linear Algebra Appl.}, 146:31--47, 1991.

\bibitem{rakic}
D.~S. Raki\'{c}, N.~v. Din\v{c}i\'{c}, and D.~S. Djordjevi\'{c}.
\newblock Group, {M}oore-{P}enrose, core and dual core inverse in rings with involution.
\newblock {\em Linear Algebra Appl.}, 463:115--133, 2014.

\bibitem{rakic2015}
D.~S. Raki{\'c} and D.~S. Djordjevi{\'c}.
\newblock Star, sharp, core and dual core partial order in rings with involution.
\newblock {\em Applied Mathematics and Computation}, 259:800--818, 2015.

\bibitem{raob}
A.~R. Rao and P.~Bhimasankaram.
\newblock {\em Linear Algebra}.
\newblock Hindustan Book Agency, New Delhi, 2000.

\bibitem{RaoMitra71}
C.~R. Rao and S.~K. Mitra.
\newblock {\em Generalized inverse of matrices and its applications}.
\newblock John Wiley \& Sons, Inc., New York-London-Sydney, 1971.

\bibitem{RaoRao98}
C.~R. Rao and M.~B. Rao.
\newblock {\em Matrix algebra and its applications to statistics and econometrics}.
\newblock World Scientific Publishing Co., Inc., River Edge, NJ, 1998.

\bibitem{JR_rev}
J.~K. Sahoo and R.~Behera.
\newblock Reverse-order law for core inverse of tensors.
\newblock {\em Comput. Appl. Math.}, 97(37):Paper No. 9, 2020.

\bibitem{SahBe20}
J.~K. Sahoo, R.~Behera, P.~S. Stanimirovi\'{c}, V.~N. Katsikis, and H.~Ma.
\newblock Core and core-{EP} inverses of tensors.
\newblock {\em Comput. Appl. Math.}, 39(1):Paper No. 9, 2020.

\bibitem{SheGuo07}
X.~Sheng and G.~Chen.
\newblock The generalized weighted {M}oore-{P}enrose inverse.
\newblock {\em J. Appl. Math. Comput.}, 25(1-2):407--413, 2007.

\bibitem{PredragKM17}
P.~S. Stanimirovi\'{c}, V.~N. Katsikis, and H.~Ma.
\newblock Representations and properties of the {$W$}-weighted {D}razin inverse.
\newblock {\em Linear Multilinear Algebra}, 65(6):1080--1096, 2017.

\bibitem{PreMo20}
P.~S. Stanimirovi{\'c}, D.~Mosi{\'c}, and H.~Ma.
\newblock New classes of more general weighted outer inverses.
\newblock {\em Linear and Multilinear Algebra}, pages 1--26, 2020.

\bibitem{YiminWei98}
W.~Sun and Y.~Wei.
\newblock Inverse order rule for weighted generalized inverse.
\newblock {\em SIAM J. Matrix Anal. Appl.}, 19(3):772--775, 1998.

\bibitem{SunWei02}
W.~Sun and Y.~Wei.
\newblock Triple reverse-order law for weighted generalized inverses.
\newblock {\em Appl. Math. Comput.}, 125(2-3):221--229, 2002.

\bibitem{TianWang11}
Y.~Tian and H.~Wang.
\newblock Characterizations of {EP} matrices and weighted-{EP} matrices.
\newblock {\em Linear Algebra Appl.}, 434(5):1295--1318, 2011.

\bibitem{WangYimin18}
G.~Wang, Y.~Wei, and S.~Qiao.
\newblock {\em Generalized inverses: theory and computations}, volume~53 of {\em Developments in Mathematics}.
\newblock Springer, Singapore; Science Press Beijing, Beijing, second edition, 2018.

\bibitem{WangLi19}
H.~Wang, N.~Li, and X.~Liu.
\newblock The $m$-core inverse and its applications.
\newblock {\em Linear and Multilinear Algebra}, 0(0):1--19, 2019.

\bibitem{wang2015}
H.~Wang and X.~Liu.
\newblock Characterizations of the core inverse and the core partial ordering.
\newblock {\em Linear Multilinear Algebra}, 63(9):1829--1836, 2015.

\bibitem{ZhangWei16}
X.~Zhang, Q.~Xu, and Y.~Wei.
\newblock Norm estimations for perturbations of the weighted {M}oore-{P}enrose inverse.
\newblock {\em J. Appl. Anal. Comput.}, 6(1):216--226, 2016.

\bibitem{ZhangXu17}
X.~Zhang, S.~Xu, and J.~Chen.
\newblock Core partial order in rings with involution.
\newblock {\em Filomat}, 31(18):5695--5701, 2017.

\bibitem{zhou2019core}
M.~Zhou, J.~Chen, and D.~Wang.
\newblock The core inverses of linear combinations of two core invertible matrices.
\newblock {\em Linear and Multilinear Algebra}, pages 1--17, 2019.

\end{thebibliography}
\end{document}